\newcommand{\calA}{\mathcal{A}}
\newcommand{\calF}{\mathcal{F}}
\newcommand{\frakg}{\mathfrak g}
\newcommand{\be}{\begin{equation}}
\newcommand{\ee}{\end{equation}}
\newcommand{\lamrodd}{\Lambda_{\ell,\text{reg}}^\text{odd}}
\newcommand{\lamrev}{\Lambda_{\ell,\text{reg}}^\text{even}}
\newcommand{\Tr}{\text{Tr}}
\newcommand{\Z}{\mathbb{Z}}
\newcommand{\Q}{\mathbb{Q}}
\newcommand{\ds}{\displaystyle}
\theoremstyle{plain}
\newtheorem{theorem}{Theorem}[section]
\newtheorem{lemma}[theorem]{Lemma}
\newtheorem{proposition}[theorem]{Proposition}
\theoremstyle{definition}
\newtheorem{example}[theorem]{Example}
\newtheorem{remark}[theorem]{Remark}
\theoremstyle{remark}
\begin{document}


\title[Article Title]{$\ell$-adic properties and congruences of $\ell$-regular partition functions}

\author{\fnm{Ahmad} \sur{El-Guindy}$^{1*}$\orcidlink{0000-0002-8581-5465}}\email{ aelguindy@sci.cu.edu.eg}

\author{\fnm{Mostafa} \sur{M. Ghazy}$^{2}$\orcidlink{0009-0003-0421-6079}}\email{ mostafa\_ghazy@science.tanta.edu.eg}



\affil{$^{1*}$\orgdiv{Department of Mathematics}, \orgname{Faculty of Science, Cairo University}, \orgaddress{ \city{Giza}, \postcode{12613}, \state{Cairo}, \country{Egypt}}}

\affil{$^{2}$\orgdiv{Department of Mathematics}, \orgname{Faculty of Science, Tanta University}, \orgaddress{\city{Tanta}, \postcode{31527}, \state{Tanta}, \country{Egypt}}}


\abstract{We study $\ell$-regular partitions by defining a sequence of modular forms of level $\ell$ and quadratic character which encode their $\ell$-adic behavior. We show that this sequence is congruent modulo increasing powers of $\ell$ to level $1$ modular forms of increasing weights. We then prove that certain $\Z/\ell^m\Z$-modules generated by our sequence are isomorphic to certain subspaces of level $1$ cusp forms of weight independent of the power of $\ell$, leading to a uniform bound on the ranks of those modules and consequently to $\ell$-adic relations between $\ell$-regular partition values.}

\keywords{Regular partitions, congruences, modular forms}

\pacs[MSC Classification]{11F33, 11P83}

\maketitle


\section{Introduction}

The study of the partition function and its variants has been a source of inspiration for many number theorists. A partition of a nonnegative integer $n$ is a non-ascending sequence of positive integers that add up to $n$. Letting $p(n)$ denote the number of partitions of $n$, we have the generating function 

\begin{equation}
\sum_{n=0}^{\infty}p(n)q^n=\prod_{n=1}^{\infty}\frac{1}{1-q^n}.
\end{equation}

The Ramanujan congruences and their prime power extensions, which were established by Atkin\cite{ATK}, Ramanujan \cite{Ramanujan1921} and Watson \cite{Wat}, are some of the most fundamental and elegant arithmetic properties of $p(n)$. Suppose $\ell\geq 5$ is  prime, and for $m\geq 1$ define $\delta_\ell(m)$ by $24\delta_\ell(m)\equiv 1\pmod{\ell^m}$ and $1\leq \delta_\ell(m)\leq \ell^m-1$. Then, for all $n\geq 0$ we have 

\begin{align}\label{ram}
\begin{split}  
&p(5^mn+\delta_5(m))\equiv 0\pmod{5^m},\\
&p(7^mn+\delta_7(m))\equiv 0\pmod{7^{\lfloor m/2\rfloor+1}},\\
&p(11^mn+\delta_{11}(m))\equiv 0\pmod{11^m}.
 \end{split}
\end{align} 

It is natural to seek extensions of those remarkable congruences, as well as attempt to place them in a wider context. Fundamental work by Ahlgren and Ono \cite{AhlDis,AhlOnoCon,OnoDis} proved the existence of infinitely many non-nested arithmetic progressions $An+B$ where $p(An+B)\equiv 0\pmod{M}$ for fixed $M$ coprime to 6. More recently, Folsom, Kent, and Ono incorporated the theory of $\ell-$adic modular forms in the sense of Serre as in \cite{SJP} to establish a general framework for partition congruences \cite{FKO}, unveiling additional congruences such as 

\be\label{p13}
p(13^4n+27,371)\equiv 25p(13^2n+162)\pmod{13^2}.
\ee

To be more specific, they defined a sequence of functions $L_\ell(b,z)$ encoding $\ell$-adic properties of $p(n)$, and they relate them to certain finite dimensional spaces of modular forms of level 1. The cases where those spaces are of dimension $0$ correspond to \eqref{ram}, while other cases lead to  extended and systematic families of congruences such as \eqref{p13}. The work of \cite{FKO} was elaborated on by Boylan and Webb in \cite{BMJ}, and it was also extended to the study of $r$-colored partitions and Andrews' spt function in \cite{BelmontLee}. 

In the present work, we wish to extend those results and techniques to study $\ell$-regular partition functions, which we shall recall their definition below. This also requires the extension of results from \cite{FKO,BMJ} to the setting of modular forms with quadratic Nebentypus.

\section*{Acknowledgements}
 The authors would like to sincerely thank the anonymous referee for a thorough reading of the initial version of this work and for their helpful and thoughtful suggestions which significantly improved the presentation and content of the paper.  We are also truly indebted to Ken Ono for several useful discussions. In addition, A. El-Guindy is grateful to the Abdus Salam International Centre for Theoretical Physics for its warm hospitality through the Associates Programme 2019-2024 which was instrumental for finalizing this manuscript.  

\subsection{$\ell-$Regular partition functions}

A partition of an integer $n$ is called $k$-regular if none of its parts is divisible by $k$. Let $b_k(n)$ denote the number of $k$-regular partitions of $n$ (setting $b_k(0)=1$ by convention), then we have the generating function

\[
\ds \sum_{n=0}^\infty b_k(n)q^n=\prod_{m=1}^\infty\frac{ (1-q^{km})}{ (1-q^m)}.
\]

For prime $\ell$, $b_\ell(n)$ is the number of irreducible $\ell$-modular representations of the symmetric group $S_n$. There has been numerous studies on the arithmetic of $b_k(n)$ modulo $m$  for various values of $k$ and $m$, see \cite{XEX,AB,Ray21,7-regular,CAlDraPen,DanPen,FurPen,GorOno,Penn,LovPen,Webb} to name just a few. Broadly speaking, some of those works address cases where $k$ and $m$ are coprime, while others address the case where $k$ and $m$ are not coprime, and in particular the case where $k=m=\ell$ is itself prime. As examples of the latter, Dandurand  and Penniston \cite{DanPen}, using the theory of complex multiplication, determined exact criteria for the $\ell$-divisibility of $b_\ell(n)$ for $\ell \in \{5,7,11\}$, whereas Xia \cite{XEX}, using theta function identities of Ramanujan, obtained congruences of the form

\[
b_\ell(A(k)n+B(k))\equiv C(k)b_\ell(n) \pmod \ell,
\]
for $\ell\in \{13, 17,19\}$ and certain functions $A(k), B(k), C(k)$ depending on $\ell$ and $k$. In earlier work, Gordon and Ono \cite{GorOno} showed that, in an asymptotic statistical sense, $b_k(n)$ is expected to be a multiple of any power of $p$ for every prime $p\mid k$.

For the remainder of the paper, let $\ell\geq 5$ be prime. The main aim of the present work is to provide systematic congruences for all $b_\ell$ modulo $\ell$ for certain arithmetic progression in a manner similar to the work of Folsom, Kent, and Ono in \cite{FKO}, Boylan and Webb \cite{BMJ} and Belmont et al. in \cite{BelmontLee}. The general structure of the arguments is rather similar, yet there are a few important differences that we shall highlight as they arise. In an informal sense,  some of our arguments amount to ``taking a square root" of certain results and arguments in those works. One instance of this phenomenon is that weights of modular forms we study are half  their counterparts in \cite{FKO, BMJ, BelmontLee}, and generally speaking they live in spaces with quadratic Nebentypus.

Our work depends on studying the sequence of generating functions, defined for $b\geq 0$ by

\be\label{bdef}
B_\ell(b;z):=\sum_{n=0}^\infty b_\ell\left(\frac{\ell^b n -\ell+1}{24}\right)q^{\frac{n}{24}},
\ee
where by convention  $b_\ell(0)=1$ and $b_\ell(\alpha)=0$ whenever $\alpha$ is anything but a nonnegative integer. Note that since $\ell^{2m+1}\equiv \ell \pmod{24}$ and $\ell^{2m}\equiv 1 \pmod{24}$, we could identify the equivalence classes of $n$ for which  $\frac{\ell^b n-\ell+1}{24}$ is an integer and hence  $b_\ell\left(\frac{\ell^a n-\ell+1}{24}\right)$ doesn't vanish. For notational convenience,   we attach an integer $c$ to primes $\ell\geq 5$ as follows

\[
c=c_\ell:=24\left\lceil\frac{\ell-1}{24}\right\rceil-(\ell-1)
\]
(so that $c+\ell-1\equiv 0 \pmod{24}$ and $0\leq c<24$). It follows that

\[
B_\ell(b;z)=\begin{cases}
\sum_{n=0}^\infty b_\ell\left(\ell^a n+\frac{\ell^a c-\ell+1}{24}\right)q^{n+\frac{c}{24}} \textrm{ if $b$ is odd}, \\
\sum_{n=1}^\infty b_\ell\left(\ell^a n-\frac{\ell^a c+\ell-1}{24}\right)q^{n-\frac{c}{24}}\textrm{ if $b$ is even}.  
\end{cases}
\]

Let $\eta(z)=q^{\frac{1}{24}}\prod_{n=1}^\infty (1-q^n)$ be the standard Dedekind eta function and set

\[
\Phi_\ell(z)=\frac{\eta(\ell^2 z)}{\eta(z)}.
\]
Recall that the Atkin $U(\ell)$-operator acts on $q$-series by 
 
\begin{equation}\label{udef}
\ds \left(\sum a(n)q^n\right)\mid U(\ell):=\sum a(n\ell)q^n.
\end{equation}
We also set $R_\ell(0;z)=\eta(\ell z)\eta(z)^{c-1}$ and define a sequence of functions inductively for $b\geq 1$ by 

\[
R_\ell(b;z)=\begin{cases}
R_\ell(b-1;z) \Phi_\ell^c(z)\mid U(\ell) \textrm{ if $b$ is odd}, \\
R_\ell(b-1;z)|U(\ell) \textrm{ if $b$ is even}.  
\end{cases}
\]
A straightforward computation shows that the sequence $R_\ell$ is closely related to $B_\ell$ as follows.

\begin{equation}
\label{Bell}
  R_\ell(b;z)=\begin{cases}
B_\ell(b;z) \eta(\ell z)^c \textrm{ if $b$ is odd}, \\
B_\ell(b;z)\eta(z)^c \textrm{ if $b$ is even}.  
\end{cases}  
\end{equation}

Consider the following two infinite families of descending $\Z/\ell^m\Z$-modules, where $m$ is a positive integer:

\[
\lamrodd(2b+1,m):=\text{Span}_{\Z/\ell^m\Z}\{R_\ell(2\beta+1;z) \pmod {\ell^m}: \beta\geq b\},
\]

\[\lamrev(2b,m):=\text{Span}_{\Z/\ell^m\Z}\{R_\ell(2\beta;z) \pmod {\ell^m}: \beta\geq b\}.
\]
The following result shows that, despite being initially defined by infinitely many generators, these modules stabilize into (isomorphic) finitely generated $\Z/\ell^m\Z$-modules.
\begin{theorem}\label{thm1}
Let $\ell\geq 5$ be prime. For every positive integer $m$ there exists an integer $\mathfrak{b}_\ell(m)$ satisfying the following.
\begin{enumerate}
 \item The nested sequence of $\Z/\ell^m \Z$-modules
 
\begin{equation}\label{nestodd}    
\lamrodd(1,m)\supseteq \lamrodd(3,m)\supseteq \cdots \supseteq \lamrodd(2b+1,m)\supseteq \cdots
\end{equation}
stabilizes for all $b$ with $2b+1\geq \mathfrak{b}_\ell(m)$. Moreover, if we denote the stabilized $\Z/\ell^m \Z$-module by $\Omega_{\ell,\text{reg}}^\text{odd}(m)$ then its rank is bounded above by 

\begin{equation}\label{rankbound}
    \left\lfloor\frac{\ell-1}{12}\right\rfloor-\left\lfloor\frac{\ell-1}{24}\right\rfloor
\end{equation}
\item The nested sequence of $\Z/\ell^m \Z$-modules

\[
\lamrev(0,m)\supseteq \lamrev(2,m)\supseteq \cdots \supseteq \lamrev(2b,m)\supseteq \cdots
\]
stabilizes for all $b$ with $2b\geq \mathfrak{b}_\ell(m)$. Moreover, if we denote the stabilized $\Z/\ell^m \Z$-module by $\Omega_{\ell,\text{reg}}^\text{even}(m)$ then  $\Omega_{\ell,\text{reg}}^\text{even}(m)\cong \Omega_{\ell,\text{reg}}^\text{odd}(m)$.
\end{enumerate}

\end{theorem}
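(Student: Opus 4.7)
My plan is to mirror the Folsom--Kent--Ono/Boylan--Webb strategy, suitably adapted from the trivial-character setting to the quadratic Nebentypus $\chi_\ell=\left(\frac{\cdot}{\ell}\right)$ that naturally appears here (the single $\eta(\ell z)$ factor in $R_\ell(0;z)$ contributes the half-weight responsible for the character). The foundational step is to establish, using standard transformation properties of $\eta(z)$, of $\Phi_\ell(z)$, and of the $U(\ell)$-operator, that each $R_\ell(b;z)$ is a cusp form of level $\ell$, weight $c/2$, and character $\chi_\ell$; in particular every generator of $\lamrodd(2b+1,m)$ and $\lamrev(2b,m)$ lies, before reduction mod $\ell^m$, in the single finite-dimensional space $S_{c/2}(\Gamma_0(\ell),\chi_\ell)$.

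The central technical step is to descend these congruences from level $\ell$ to level $1$ modulo $\ell^m$. I would combine the Eisenstein series $E_{\ell-1}\equiv 1\pmod{\ell}$ with suitable auxiliary forms carrying the character $\chi_\ell$ (or appropriate products that are $\equiv 1\pmod{\ell^m}$), and use the trace/Hecke interpretation of $U(\ell)$ from $\Gamma_0(\ell)$ down to $\mathrm{SL}_2(\Z)$, to produce, for each $m$, a weight $k_m$ (depending on $\ell$ and $m$ but not on $b$) and level-$1$ cusp forms $\widetilde R_\ell(b;z;m)\in S_{k_m}(\mathrm{SL}_2(\Z))$ satisfying $R_\ell(b;z)\equiv\widetilde R_\ell(b;z;m)\pmod{\ell^m}$ for all $b\ge 0$. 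This is the main obstacle in the proof, since the arguments of \cite{FKO} and \cite{BMJ} are phrased for the trivial character, and a faithful transplantation here requires tracking how $\chi_\ell$ interacts with the auxiliary Eisenstein series and with the trace map.

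With these level-$1$ congruences, both families $\{\lamrodd(2b+1,m)\}_b$ and $\{\lamrev(2b,m)\}_b$ embed into $S_{k_m}(\mathrm{SL}_2(\Z))\otimes\Z/\ell^m\Z$ via reduction. Since $\Z/\ell^m\Z$ is Artinian, any descending chain of finitely generated submodules stabilizes, yielding $\mathfrak{b}_\ell(m)$. To sharpen the rank bound to $\lfloor(\ell-1)/12\rfloor-\lfloor(\ell-1)/24\rfloor$, I would identify the stabilized image as a specific subspace of level-$1$ cusp forms (of a weight that, per the abstract, is ultimately independent of $m$) cut out by the cusp conditions inherited from $\Gamma_0(\ell)$-cuspidality and by the image of $U(\ell)$; the rank of this subspace is precisely the stated quantity, which is the $\chi_\ell$-analogue (and essentially the halving) of the dimension count arising in \cite{FKO,BMJ}.

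Finally, for the isomorphism $\Omega_{\ell,\mathrm{reg}}^{\mathrm{even}}(m)\cong\Omega_{\ell,\mathrm{reg}}^{\mathrm{odd}}(m)$, I would exploit the very maps built into the recursion: $f\mapsto f\Phi_\ell^c\mid U(\ell)$ sends $R_\ell(2\beta;z)$ to $R_\ell(2\beta+1;z)$, and $f\mapsto f\mid U(\ell)$ sends $R_\ell(2\beta+1;z)$ to $R_\ell(2\beta+2;z)$. These descend to $\Z/\ell^m\Z$-module homomorphisms between the two families that, on the stabilized modules, are mutually surjective by construction; by equality of ranks (from the preceding step), each is in fact an isomorphism.
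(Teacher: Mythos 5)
Your overall route is the same as the paper's (an adaptation of Folsom--Kent--Ono and Boylan--Webb to quadratic Nebentypus), but there is an error in your foundational step and genuine gaps at the two hardest points. First, the claim that each $R_\ell(b;z)$ lies in $S_{c/2}(\Gamma_0(\ell),\chi_\ell)$ is false: $\Phi_\ell^c(z)=(\eta(\ell^2z)/\eta(z))^c$ has a pole at the cusp $0$ (its order there is $\tfrac{c(1-\ell^2)}{24}<0$), so already $R_\ell(1;z)$ is only a \emph{weakly holomorphic} form in $M^!_{c/2}(\Gamma_0(\ell),\chi_\ell)$, which is infinite-dimensional. (This is also visible in the paper's example $R_5(1;z)\equiv 5\Delta^5\pmod{5^3}$: that is a congruence in weight $60$, not an identity in a weight-$10$ cusp space.) If your claim were true, stabilization would be immediate and most of the paper would be unnecessary; as it stands, everything must instead be routed through the mod-$\ell^m$ level-lowering that you correctly identify as the central step but do not carry out. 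That step is where the bulk of the work lies: one needs the Eisenstein series $E_\chi=E_{(\ell-1)/2,\chi}$ with $E_\chi\equiv 1\pmod\ell$, the auxiliary form $h_\chi$ and its Atkin--Lehner image, the eta-quotient $A_\ell$, and careful valuation estimates for the trace map (with exceptional pairs $(\ell,j)$ forcing larger weights $k_\ell(j)$), none of which is routine.

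Second, and more importantly for the statement at hand, your derivation of the rank bound $\lfloor\frac{\ell-1}{12}\rfloor-\lfloor\frac{\ell-1}{24}\rfloor$ is asserted rather than proved. The paper's mechanism has two ingredients you do not supply: (i) an injection $\Pi_o$ of the stabilized module $\Omega_{\ell,\mathrm{reg}}^{\mathrm{odd}}(m)$ into the \emph{fixed} space $S_{\frac{1}{2}(c\ell+\ell-1)}$, independent of $m$, whose construction requires decomposing $S^{\mathrm{odd}}(\ell,m)=S_0\oplus S_1$ and showing that $X_c(\ell)$ is an automorphism of finite order on the stabilized module so that the projection $\pi_0$ is injective and preserves $\mathrm{ord}_\infty$; and (ii) the lower bound $\mathrm{ord}_\infty(f)\ge \frac{c(\ell^2-1)}{24\ell}$ for every $f\in\Omega_{\ell,\mathrm{reg}}^{\mathrm{odd}}(m)$, coming from the order of vanishing of $\Phi_\ell^c$ at $\infty$, which is exactly what subtracts $\lfloor\frac{c(\ell^2-1)}{24\ell}\rfloor$ from $\dim S_{\frac{1}{2}(c\ell+\ell-1)}$ to yield \eqref{rankbound}. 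Your phrase ``cut out by the cusp conditions inherited from $\Gamma_0(\ell)$-cuspidality and by the image of $U(\ell)$'' gestures at this but does not identify either ingredient, and without them the bound does not follow. By contrast, your final step is fine: the stabilized even and odd modules are mutual images under $U(\ell)$ and $D_c(\ell)$, and mutually surjective maps between finite modules are bijections, which is essentially the paper's argument.
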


\begin{remark}
A natural question, following the establishment of the fact that the nested sequences of modules in Theorem \ref{thm1} stabilize, is to determine the rate of such stabilization. This seems to be a subtle question which isn't fully settled, even in the case of $p(n)$ as addressed in \cite{FKO} and in more details in \cite{BMJ}. Indeed, utilizing similar methods to those papers, we state in section 5 (after having introduced some more notation and tools) bounds for $\mathfrak{b}_\ell(m)$ in terms of dimensions of certain spaces of modular forms (cf. Remark \ref{boundrmrk}).
\end{remark}


Our next result states that for certain small primes, the ranks of the stabilized modules in Theorem \ref{thm1} are $\leq 1$, leading immediately to interesting congruences modulo powers of $\ell$. 
\begin{theorem}
\label{mainth}
 Let $5\leq \ell \leq 31$ be prime, and let $m\geq 1$. There exists positive integers $\mathfrak{b}_\ell(m)$ such that if $b_1\equiv b_2 \pmod{2}$ are integers for which $b_2> b_1 \geq \mathfrak{b}_\ell(m)$, then there is an integer $\mathfrak{B}_\ell(b_1,b_2;m)$ such that for every nonnegative integer $n$ we have 
 
\begin{align*}
 b_\ell\left(\ell^{b_1} n+\frac{\ell^{b_1} c-\ell+1}{24}\right)\equiv \mathfrak{B}_\ell(b_1,b_2;m)b_\ell\left(\ell^{b_2} n+\frac{\ell^{b_2} c-\ell+1}{24}\right)\pmod{\ell^m}  \text{ if $b_1$ is odd},\\
 b_\ell\left(\ell^{b_1} n-\frac{\ell^{b_1} c+\ell-1}{24}\right) \equiv \mathfrak{B}_\ell(b_1,b_2;m) b_\ell\left(\ell^{b_2} n-\frac{\ell^{b_2} c+\ell-1}{24}\right)\pmod{\ell^m}\text{ if $b_1$ is even}.
\end{align*}
Furthermore, if $\ell \in \{5,7,11\}$, then $\mathfrak{B}_\ell(b_1,b_2;m)=0$.
\end{theorem}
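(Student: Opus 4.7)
The plan is to derive Theorem \ref{mainth} as a direct corollary of Theorem \ref{thm1} by exploiting the smallness of the stabilized modules for the primes in question. A case-by-case calculation shows that the bound in \eqref{rankbound} evaluates to
\[
\left\lfloor \tfrac{\ell-1}{12} \right\rfloor - \left\lfloor \tfrac{\ell-1}{24} \right\rfloor
= \begin{cases} 0, & \ell \in \{5,7,11\}, \\ 1, & \ell \in \{13,17,19,23,29,31\} \end{cases}
\]
(and first reaches $2$ at $\ell = 37$). Consequently, for every prime in the stated range, the stabilized modules $\Omega_{\ell,\text{reg}}^{\text{odd}}(m) \cong \Omega_{\ell,\text{reg}}^{\text{even}}(m)$ are either zero or cyclic of rank one.

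Let $\mathfrak{b}_\ell(m)$ be the threshold from Theorem \ref{thm1}, and fix $b_1 < b_2$ of common parity, both $\geq \mathfrak{b}_\ell(m)$. In the rank-zero cases ($\ell \in \{5,7,11\}$) the stabilized module is trivial, so both $R_\ell(b_i; z)$ vanish modulo $\ell^m$ and $\mathfrak{B}_\ell(b_1, b_2; m) = 0$ trivially works, which yields the final assertion of the theorem. For the rank-one cases I would consider the \emph{shift-by-two} endomorphism $T$ defined on the ambient ring of $q$-series by
\[
T(x) = \bigl((x \mid U(\ell))\, \Phi_\ell^c\bigr) \mid U(\ell) \quad\text{(odd parity)},\qquad T(x) = (x\,\Phi_\ell^c) \mid U(\ell)^2 \quad\text{(even parity)}.
\]
Since $T$ is $\Z$-linear and carries $R_\ell(b; z)$ to $R_\ell(b+2; z)$, it descends to a $\Z/\ell^m\Z$-linear endomorphism of the cyclic stabilized module $\Omega$ and hence acts as multiplication by some scalar $\mu$. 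If $\mu$ were not a unit, then $\mu^k \equiv 0 \pmod{\ell^m}$ for $k$ sufficiently large, forcing every $R_\ell(b+2k; z) \equiv 0 \pmod{\ell^m}$ for such $k$; this would collapse the span of the later generators to $\{0\}$ and contradict the non-triviality of $\Omega$. Hence $\mu$ is a unit, and setting $\mathfrak{B}_\ell(b_1, b_2; m) := \mu^{-(b_2 - b_1)/2}$ produces
\[
R_\ell(b_1; z) \equiv \mathfrak{B}_\ell(b_1, b_2; m)\, R_\ell(b_2; z) \pmod{\ell^m}.
\]

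To transfer the congruence to the partition generating function, I would apply identity \eqref{Bell}. Because $b_1 \equiv b_2 \pmod{2}$, the accompanying eta-factor ($\eta(\ell z)^c$ in the odd case, $\eta(z)^c$ in the even case) appears identically on both sides; after pulling out the fractional power of $q$, the remaining factor is a formal power series over $\Z$ with constant term $1$, hence a unit in $\Z_\ell[[q]]$ that may be cancelled from the $\ell$-adic congruence. This produces $B_\ell(b_1; z) \equiv \mathfrak{B}_\ell(b_1, b_2; m)\, B_\ell(b_2; z) \pmod{\ell^m}$, and extracting the coefficient of $q^{n+c/24}$ (odd case) or $q^{n-c/24}$ (even case) from definition \eqref{bdef} yields the two stated congruences. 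The substantive content of the theorem is already concentrated in Theorem \ref{thm1}; the chief remaining subtlety here is verifying that the shift-by-two endomorphism acts by a unit on the cyclic stabilized module, which, as above, is itself forced by the non-triviality of $\Omega$.
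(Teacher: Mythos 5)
Your proposal is correct and follows essentially the same route as the paper: the paper's own proof of Theorem \ref{mainth} is just the observation that the bound \eqref{rankbound} equals $0$ for $\ell\in\{5,7,11\}$ and $1$ for $13\leq\ell\leq 31$, leaving the rest implicit, and your write-up supplies exactly the intended missing details (the shift-by-two operator acting as a unit scalar on the cyclic stabilized module, and the transfer from $R_\ell$ to $b_\ell$ via \eqref{Bell} after cancelling the common eta-factor). The only cosmetic point is that in the rank-one cases you should note $\Omega$ may still be trivial, in which case $\mathfrak{B}_\ell(b_1,b_2;m)=0$ works there too.
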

\begin{proof}
 For $\ell \in \{5,7,11\}$ the bound \eqref{rankbound} is $0$, whereas for $13\leq \ell \leq 31$ that bound is $1$.   
\end{proof}

\begin{example}
We illustrate Theorem \ref{mainth} with $\ell= 17$. For $m=1$, Theorem \ref{mainth} applies for every pair of positive integers $b_1<b_2$ with the same parity. We let $b_1:=1$ and $b_2:=3$. It turns out that $\mathfrak{B}(1,3;1)=14$, and so for all $n\geq 0$ we have 
\begin{equation}\label{ex17}
b_{17}(17n+5)\equiv 14~b_{17}(17^3 n+1637)\pmod{17}.
\end{equation}
Indeed the first few cases of \eqref{ex17} can be verified from $(b_{17}(5),b_{17}(22), b_{17}(39))=(7,995,30176)$, and 
\begin{tiny}
    \begin{align*}
    b_{17}(1637)&=16073386675530933163774672019535658494433,\\
    b_{17}(6550)&=309851360589797865267403451823810081019783734795785160796304968259966951502243927905,\\
    b_{17}(11463)&= 3560475442109243873958783824257503520544859191439134627497615645463903848278620702913653368340826627325667207616.
\end{align*}
\end{tiny}
\end{example}


\section{\bf Modular properties of $R_\ell(b;z)$ and related functions}

\subsection{Basic definitions}

For a positive integer $N$, a function of the form $\ds f(z)=\prod_{\delta \mid N}\eta(\delta z)^{r_\delta}$, where $r_\delta \in \Z$, is called an \emph{eta-quotient}. Such eta-quotients will figure repeatedly in our work, and the following well-known result (see for example Theorems 1.64 and 1.65 of \cite{OWEB}) describes their modularity properties.

\begin{lemma}\label{etaquo}
Let $N$ be a positive integer and $r_\delta \in\Z$ be such that

\[
\sum_{\delta\mid N}\delta r_\delta \equiv \sum_{\delta\mid N}\frac{N}{\delta} r_\delta\equiv 0 \pmod{24}.
\]
If  $f(z)=\ds \prod_{\delta \mid N}\eta(\delta z)^{r_\delta}$,
then for every $\left(\begin{smallmatrix}
    a&b\\
    c&d
\end{smallmatrix}\right) \in \Gamma_0(N),$ we have $f\left(\frac{az+b}{cz+d}\right)=\chi(d)(cz+d)^kf(z)$, where $k=\frac{1}{2}\sum_{\delta|N}r_\delta$ and 

\[
\chi(d)=\left(\frac{(-1)^k\prod_{\delta\mid N} \delta^{r_\delta}}{d}\right).
\]

Furthermore, If $c,d$ are coprime positive integers with $d\mid N$ then the order of vanishing of $f$ at the cusp $\frac{c}{d}$ is given by 

\begin{equation}\label{cuspord}  
\textrm{ord}_{\frac{c}{d}}(f)=\frac{N}{24}\sum_{\delta\mid N}\frac{\text{gcd}(d,\delta)^2r_\delta}{gcd(d,\frac{N}{d})d\delta}.
\end{equation}

\end{lemma}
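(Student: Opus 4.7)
The plan is to reduce Lemma \ref{etaquo} to the classical transformation formula of the Dedekind eta function under $SL_2(\Z)$, namely
\[
\eta(\gamma z) = \varepsilon(\gamma)(cz+d)^{1/2}\eta(z)
\]
for $\gamma=\left(\begin{smallmatrix}a&b\\c&d\end{smallmatrix}\right)\in SL_2(\Z)$ with $c>0$, where $\varepsilon(\gamma)$ is an explicit $24$-th root of unity expressible through Dedekind sums. The key algebraic observation is that for any $\gamma\in\Gamma_0(N)$ and any divisor $\delta\mid N$, the divisibility $N\mid c$ forces $\delta\mid c$, so that
\[
\gamma_\delta := \left(\begin{smallmatrix} a & b\delta \\ c/\delta & d \end{smallmatrix}\right)\in SL_2(\Z),\qquad \delta(\gamma z)=\gamma_\delta(\delta z).
\]
Substituting into $f(\gamma z)=\prod_{\delta\mid N}\eta(\gamma_\delta(\delta z))^{r_\delta}$ and pulling out the common automorphy factor yields $f(\gamma z)=M(\gamma)(cz+d)^{k}f(z)$ with $k=\tfrac12\sum_\delta r_\delta$, where $M(\gamma)$ is a product of $24$-th roots of unity depending only on $a,b,c,d$.

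The main task, and the principal obstacle, is to show that $M(\gamma)$ collapses to the stated Kronecker symbol $\chi(d)$. Using the explicit formula for $\varepsilon(\gamma_\delta)$, one finds $M(\gamma)$ splits into a sign term, a Jacobi-symbol contribution, and a Dedekind-sum piece. The hypothesis $\sum_{\delta\mid N}(N/\delta)r_\delta\equiv 0\pmod{24}$ annihilates the total Dedekind-sum exponent (after invoking the reciprocity relation $s(a,c/\delta)+s(c/\delta,a)=-\tfrac14+\tfrac1{12}(a/(c/\delta)+(c/\delta)/a+\delta/(ac))$), while $\sum_{\delta\mid N}\delta r_\delta\equiv 0\pmod{24}$ absorbs the remaining roots of unity arising from the upper-right entries $b\delta$. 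Collapsing the surviving Jacobi symbols into the single symbol $\left(\tfrac{(-1)^k\prod_\delta \delta^{r_\delta}}{d}\right)$ requires a careful case split on the parities of $d$ and of each $c/\delta$, together with quadratic reciprocity for the Kronecker symbol; this is the most delicate part of the proof.

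For the order of vanishing at a cusp $c/d$ with $\gcd(c,d)=1$ and $d\mid N$, I would conjugate by a scaling matrix $\sigma\in SL_2(\Z)$ sending $\infty$ to $c/d$ and work in the local parameter $q_h=e^{2\pi i z/h}$ of cusp width $h=N/(d\gcd(d,N/d))$. Reusing the identity $\delta\sigma z=\sigma'(\delta z)$ from the first step, each $\eta(\delta\sigma z)$ is a nonvanishing unit times a power of $q$ with exponent $\gcd(d,\delta)^2/(24\delta)$. Taking the product with weights $r_\delta$ and rescaling to $q_h$ produces exactly $\tfrac{N}{24}\sum_{\delta\mid N}\tfrac{\gcd(d,\delta)^2 r_\delta}{\gcd(d,N/d)\,d\,\delta}$, completing the proof of \eqref{cuspord}.
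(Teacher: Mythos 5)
The paper does not prove this lemma at all: it is quoted as a standard result, with a pointer to Theorems 1.64 and 1.65 of \cite{OWEB} (the Gordon--Hughes--Newman/Ligozat criteria), so there is no in-paper argument to compare against. Your outline is exactly the standard proof behind those theorems --- the conjugation $\delta(\gamma z)=\gamma_\delta(\delta z)$ with $\gamma_\delta=\left(\begin{smallmatrix}a&b\delta\\ c/\delta&d\end{smallmatrix}\right)$, the reduction of the multiplier via the eta multiplier system and Dedekind-sum reciprocity under the two congruence hypotheses, and the cusp-order computation via a scaling matrix and the width $N/(d\gcd(d,N/d))$ --- and the arithmetic you cite (leading exponent $\gcd(d,\delta)^2/(24\delta)$, rescaled by the width) does reproduce \eqref{cuspord}. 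The only caveat is that the collapse of the product of $24$-th roots of unity into the single Kronecker symbol $\chi(d)$ is asserted rather than carried out; that bookkeeping is genuinely the hard part, but your plan for it is the correct one.
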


It follows from Lemma \ref{etaquo} that the character corresponding to $R_\ell(0;z)$ is   

\[
\chi_\ell(d)=\left(\frac{(-1)^{\frac{c}{2}}\ell}{d}\right).
\]
By quadratic reciprocity, we see that $\chi_\ell$ in fact coincides with the Legendre symbol $\left(\frac{\bullet}{\ell}\right)$. When the context is clear, we shall omit the subscript on $\chi_\ell$.

Modifying the construction in  \cite{FKO} to fit the present study, we define the operator $D_c(\ell)$ by

\begin{equation}
f\mid D_c(\ell):=(f\cdot \Phi_\ell^c(z))\mid U(\ell).
\end{equation}
To study congruence properties of $\ell$-regular partitions we will consider the alternating action by the operators $U(\ell), D_c(\ell)$. It will sometimes be useful  to think of this sequence as simply repeated use of the operators $U(\ell)\circ D_c(\ell)$ or $D_c(\ell)\circ U(\ell)$. To this end, we define

\begin{equation}
f\mid X_c(\ell):=(f\mid U(\ell))\mid D_c(\ell)~, \text{and}
\end{equation}

\begin{equation}
    f\mid Y_c(\ell):=(f\mid D_c(\ell))\mid U(\ell). \
\end{equation}

We now prove a result about the modularity of the function $R_\ell(b;z)$. Using standard notation, we denote by $M_k(\Gamma_0(N),\chi)$ (resp. $S_k(\Gamma_0(N),\chi)$) the space of holomorphic modular forms (resp. cusp forms) of weight $k$ on $\Gamma_0(N)$ and character $\chi$, and we denote by $M_k^!(\Gamma_0(N),\chi)$ the space of weakly holomorphic modular forms of weight $k$ on $\Gamma_0(N)$ and character $\chi$, i.e. those forms are whose poles (if any) are supported at the cusps of $\Gamma_0(N)$.

\begin{lemma}
    For all $b\geq 0$, $R_\ell(b;z)\in M_{\frac{c}{2}}^!(\Gamma_0(\ell),\chi_\ell)\cap \mathbb{Z}\llbracket q \rrbracket$.
\end{lemma}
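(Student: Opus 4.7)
The plan is induction on $b$. For the base case, apply Lemma \ref{etaquo} to the eta-quotient $R_\ell(0;z) = \eta(\ell z)\eta(z)^{c-1}$ with $N = \ell$, $r_1 = c-1$, $r_\ell = 1$. The two divisibility conditions reduce to $c + \ell - 1 \equiv 0 \pmod{24}$ (the definition of $c$) and $\ell(c-1) + 1 \equiv 1 - \ell^2 \equiv 0 \pmod{24}$, where the last step uses $c \equiv 1 - \ell \pmod{24}$ together with $\ell^2 \equiv 1 \pmod{24}$ for $\ell \geq 5$. This yields weight $c/2$, level $\Gamma_0(\ell)$, and character $\chi_\ell$, as already noted in the text. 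Holomorphy on the upper half-plane (hence weak holomorphy) is immediate from the non-vanishing of $\eta$, and the leading $q$-power is $(c+\ell-1)/24 \in \Z_{\geq 0}$ with the remaining coefficients integers from the product formulas for $\eta$, so $R_\ell(0;z) \in \Z\llbracket q\rrbracket$.

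The inductive step first requires that $\Phi_\ell^c \in M_0^!(\Gamma_0(\ell^2)) \cap \Z\llbracket q\rrbracket$ with trivial character; this follows again from Lemma \ref{etaquo} applied with $N = \ell^2$, $r_1 = -c$, $r_{\ell^2} = c$, since both congruence conditions collapse to $c(\ell^2 - 1) \equiv 0 \pmod{24}$ and the character evaluates to $\left(\frac{\ell^{2c}}{d}\right) = 1$. Now assume $R_\ell(b-1;z) \in M_{c/2}^!(\Gamma_0(\ell), \chi_\ell) \cap \Z\llbracket q\rrbracket$. If $b$ is even, then $R_\ell(b;z) = R_\ell(b-1;z) \mid U(\ell)$, and the standard fact that $U(\ell)$ preserves $M_k^!(\Gamma_0(N), \chi)$ whenever $\ell \mid N$ closes this case. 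If $b$ is odd, view $R_\ell(b-1;z)$ as lying in $M_{c/2}^!(\Gamma_0(\ell^2), \chi_\ell)$ via the inclusion $\Gamma_0(\ell^2) \subset \Gamma_0(\ell)$, multiply by $\Phi_\ell^c$ to remain in that space, and then apply $U(\ell)$; the classical level-lowering statement $U(\ell): M_k^!(\Gamma_0(\ell^2), \chi) \to M_k^!(\Gamma_0(\ell), \chi)$, valid because the conductor of $\chi_\ell$ is $\ell$ rather than $\ell^2$, places the result back in $M_{c/2}^!(\Gamma_0(\ell), \chi_\ell)$. Integrality of $q$-coefficients is preserved throughout, since $U(\ell)$ merely extracts a subsequence of coefficients and $\Phi_\ell^c$ already lies in $\Z\llbracket q\rrbracket$.

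The only non-obvious ingredient is the level-lowering behaviour of $U(\ell)$ on a $\Gamma_0(\ell^2)$-form whose character has conductor $\ell$; without it one would accumulate a factor of $\ell$ in the level at every odd step and the inductive statement would fail. This is a standard consequence of the Atkin theory of the $U$-operator (cf.\ the discussion in \cite{OWEB}), but recognising it as the crucial input---rather than the routine bookkeeping required to apply Lemma \ref{etaquo}---is the only genuinely delicate point in the argument.
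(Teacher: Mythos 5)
Your proposal is correct and follows essentially the same route as the paper: verify the base case and the modularity of $\Phi_\ell^c$ via the eta-quotient criterion, then induct using the two standard mapping properties $U(\ell):M_k^!(\Gamma_0(\ell^2),\chi)\to M_k^!(\Gamma_0(\ell),\chi)$ and $U(\ell):M_k^!(\Gamma_0(\ell),\chi)\to M_k^!(\Gamma_0(\ell),\chi)$, which are exactly the facts the paper cites. You simply supply more of the routine verification (the congruence conditions of Lemma \ref{etaquo} and the conductor caveat for level-lowering) than the paper chooses to write out.
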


\begin{proof}
    For $b=0$, we have that $R_\ell(0;z)=\eta(\ell z)\eta(z)^{c-1}\in M_{\frac{c}{2}}^!(\Gamma_0(\ell),\chi_\ell)\cap \mathbb{Z}\llbracket q \rrbracket$. It also follows that $\Phi_\ell(z)\in M_0^!(\Gamma_0(\ell^2))\cap \Z\llbracket q \rrbracket$. From its definition we easily see that $U(\ell)$ preserves $\Z\llbracket q \rrbracket$, and it is also well-known that 
    \begin{equation}\label{ulsq}
        U(\ell):M_k^!(\Gamma_0(\ell^2),\chi)\to M_k^!(\Gamma_0(\ell),\chi),
    \end{equation}
and 
\begin{equation}
U(\ell):M_k^!(\Gamma_0(\ell),\chi)\to M_k^!(\Gamma_0(\ell),\chi).    
\end{equation}    
    Combining the facts above for $k=\frac{c}{2}$, an inductive argument show that $R_\ell(b;z)$ is in $M_k^!(\Gamma_0(\ell),\chi_\ell)\cap \mathbb{Z}\llbracket q \rrbracket$ for all $b\geq 1$.
\end{proof}  
Recall that for  $f(z)\in M_k^!(\Gamma_0(N))$ and $\ell\nmid N$ the Hecke operator $T(\ell,k)$ is defined by  
    \begin{equation}
        f(z)\mid T(\ell,k)=f(z)\mid U(\ell)+\ell^{k-1}f(z)\mid V(\ell).
    \end{equation}
It is well-known that $T(\ell,k)$ preserves $M_k^!(\Gamma_0(N),\chi)$ (as well as $M_k(\Gamma_0(N),\chi)$ and $S_k(\Gamma_0(N),\chi)$).  Hence we obtain the following stability properties of $U(\ell)$ modulo $\ell^{k-1}$.
\begin{proposition}
    \label{Stab}
    Let $\ell\geq5$ be prime, and let $f(z)\in M_k^!\cap\mathbb{Z}_{(\ell)}(( q ))$.
    \begin{enumerate}
        \item We have $f(z)\mid T(\ell,k)\equiv f(z)\mid U(\ell)\pmod{\ell^{k-1}}$.
        \item The operator $U(\ell)$ stabilizes $M_k^!\cap\mathbb{Z}_{(\ell)}((q ))$ modulo $\ell^{k-1}$.
    \end{enumerate}
    
\end{proposition}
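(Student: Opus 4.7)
The plan is that both parts follow almost immediately from the definition $f \mid T(\ell,k) = f \mid U(\ell) + \ell^{k-1} f \mid V(\ell)$ once one observes that $V(\ell)$ acts on $q$-expansions by $\sum a(n) q^n \mapsto \sum a(n) q^{\ell n}$ and therefore manifestly preserves $\ell$-integrality. For part (1), I would simply note that $f \in \mathbb{Z}_{(\ell)}((q))$ implies $f \mid V(\ell) \in \mathbb{Z}_{(\ell)}((q))$, so the second summand $\ell^{k-1} f \mid V(\ell)$ is $\equiv 0 \pmod{\ell^{k-1}}$, giving $f \mid T(\ell,k) \equiv f \mid U(\ell) \pmod{\ell^{k-1}}$ directly.

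For part (2), I would fix a level $N$ with $\ell \nmid N$ and a character $\chi$, take $f \in M_k^!(\Gamma_0(N),\chi) \cap \mathbb{Z}_{(\ell)}((q))$, and propose $g := f \mid T(\ell,k)$ as the required approximation to $f \mid U(\ell)$ inside the space. Since $T(\ell,k)$ preserves $M_k^!(\Gamma_0(N),\chi)$ (already recalled in the text), and since $g$ is a sum of two $\ell$-integral $q$-expansions, we have $g \in M_k^!(\Gamma_0(N),\chi) \cap \mathbb{Z}_{(\ell)}((q))$. Combined with part (1), the relation $f \mid U(\ell) \equiv g \pmod{\ell^{k-1}}$ then establishes stability. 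The conceptual content is that, although $U(\ell)$ alone would send a level-$N$ form to a level-$N\ell$ form, modulo $\ell^{k-1}$ it agrees with the level-preserving operator $T(\ell,k)$. There is no real obstacle here; the only point requiring a moment's care is the $\ell$-integrality of $f \mid V(\ell)$, but this is transparent from its action on $q$-expansions.
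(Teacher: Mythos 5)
Your argument is correct and is exactly the one the paper intends: the congruence in part (1) is immediate from the definition $f\mid T(\ell,k)=f\mid U(\ell)+\ell^{k-1}f\mid V(\ell)$ together with the $\ell$-integrality of $f\mid V(\ell)$, and part (2) then follows because $T(\ell,k)$ preserves $M_k^!(\Gamma_0(N),\chi)$. The paper presents the proposition as a direct consequence of these two facts without further elaboration, so your write-up matches its approach.
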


\subsection{Filtration}
The theory of filtration provides many remarkable and deep results in the study of modular forms. It enables a systematic study of the phenomena of a modular form with $\ell$-integral coefficients, usually of higher level and possibly with Nebentypus, being congruent $\pmod \ell$ to modular forms of level $1$. As we shall see, this applies to our sequence of interest $R_\ell(b;z)$.  In order to proceed, we review some of the most pertinent facts, following \cite{SJP} and \cite{SWH}.
For $0\neq f\in \mathbb{Z}_{(\ell)}\llbracket q \rrbracket$, define the filtration of $f$ modulo $\ell$ by 

\begin{equation}
w_\ell(f):=~{\text {inf}}_{k\geq 0}\{k:f\equiv g\pmod\ell ~{\text {for some}}~ g\in M_k\cap \mathbb{Z}_{(\ell)}\llbracket q \rrbracket\}.
\end{equation}
If $f\equiv 0\pmod{\ell}$ then define $w_\ell(f)=-\infty$. Note that if $f\equiv g \pmod{\ell}$ and $g\in M_k
$, then we have $w_\ell(f)\equiv k \pmod{\ell-1}$. 

The following is part of Lemme 2 on p. 213 of \cite{SJP}; it gives a bound of the filtration of the action of  $U(\ell)$ on $q$-series.

\begin{lemma}
    \label{Uoperator}
     If $f\in \mathbb{Z_{(\ell)}}\llbracket q \rrbracket$ is congruent modulo $\ell$ to a nonzero modular form in some $M_k$ of level $1$ (i.e. $w_{\ell}(f)\geq 0$), then  we have  

 \[
w_\ell(f\mid U(\ell))\leq \ell+\frac{(w_\ell(f)-1)}{\ell}.
\]
\end{lemma}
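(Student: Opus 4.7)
The plan is to pass to a level-$1$ representative of $f$ modulo $\ell$ and exploit two structural ingredients from Serre's theory of mod-$\ell$ modular forms: the theta operator $\theta = q\,d/dq$, and the identification of the Verschiebung $V(\ell)$ with the $\ell$-power Frobenius on $q$-expansions modulo $\ell$.

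Set $k := w_\ell(f) \geq 0$ and, by definition of the filtration, fix $g \in M_k \cap \mathbb{Z}_{(\ell)}[[q]]$ with $g \not\equiv 0 \pmod{\ell}$ and $f \equiv g \pmod{\ell}$. Since the formal operator $U(\ell)$ commutes with reduction modulo $\ell$, it suffices to bound $w_\ell(g \mid U(\ell))$. The starting point is the $q$-expansion identity
\[
g \;=\; g_0 \;+\; V(\ell)\bigl(g\mid U(\ell)\bigr), \qquad g_0 \;:=\; \sum_{\ell \,\nmid\, n} a_g(n)\, q^n,
\]
where $a_g(n)$ denotes the $n$-th Fourier coefficient of $g$. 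Applying Fermat's little theorem coefficient-wise identifies the $\ell$-regular part as $g_0 \equiv \theta^{\ell-1}(g) \pmod{\ell}$, since $n^{\ell-1} \equiv 1 \pmod{\ell}$ for $\ell \nmid n$ and vanishes otherwise.

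I would then invoke two standard facts. First, the theta operator raises filtration by at most $\ell + 1$, so iterating gives $w_\ell(g_0) = w_\ell(\theta^{\ell-1} g) \leq k + (\ell-1)(\ell+1) = k + \ell^2 - 1$. Second, the Verschiebung, acting on $q$-expansions by $h(q)\mapsto h(q^\ell)$, coincides modulo $\ell$ with the $\ell$-power Frobenius, so that in the graded ring of mod-$\ell$ modular forms, $w_\ell\bigl(V(\ell) h\bigr) = \ell \cdot w_\ell(h)$ for any $h \not\equiv 0 \pmod{\ell}$. Combining these with the displayed identity yields
\[
\ell \cdot w_\ell\bigl(g\mid U(\ell)\bigr) \;=\; w_\ell\bigl(V(\ell)(g\mid U(\ell))\bigr) \;=\; w_\ell(g - g_0) \;\leq\; \max\bigl(w_\ell(g),\, w_\ell(g_0)\bigr) \;\leq\; k + \ell^2 - 1,
\]
and dividing by $\ell$ produces $w_\ell(g\mid U(\ell)) \leq \ell + (k-1)/\ell$, which transfers to $f$ because $f \equiv g \pmod{\ell}$.

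The main obstacle will be the careful handling of the weight-shift inequality $w_\ell(\theta h) \leq w_\ell(h) + \ell + 1$, which can be strict precisely when $\ell \mid w_\ell(h)$; one must track the possible drops while iterating $\theta$ through weights divisible by $\ell$. Fortunately, the \emph{upper} bound remains valid at every step, so although the filtration may actually decrease at some iterates, the estimate $w_\ell(\theta^{\ell-1} g) \leq k + \ell^2 - 1$ is never violated, and the final inequality goes through uniformly.
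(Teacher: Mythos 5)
Your proof is correct. The paper does not actually prove this lemma---it quotes it verbatim from Lemme 2, p.~213 of \cite{SJP}---and your argument via the $q$-expansion identity $g=\theta^{\ell-1}g+V(\ell)\bigl(g\mid U(\ell)\bigr)\pmod{\ell}$, the filtration bound $w_\ell(\theta h)\le w_\ell(h)+\ell+1$, and the Frobenius identification $w_\ell\bigl(V(\ell)h\bigr)=\ell\, w_\ell(h)$ is precisely Serre's original proof, so the two routes coincide; the only steps you leave implicit are the standard facts that $w_\ell(h^\ell)=\ell\, w_\ell(h)$ (Serre/Swinnerton-Dyer, needed to evaluate the filtration of the Verschiebung) and that $w_\ell(g)\equiv w_\ell(\theta^{\ell-1}g)\pmod{\ell-1}$, which is what legitimizes the estimate $w_\ell(g-g_0)\le\max\bigl(w_\ell(g),w_\ell(g_0)\bigr)$ after padding by powers of $E_{\ell-1}$.
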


In the case of $R_\ell(0;z)$, we shall see that  applying $U(\ell)$ and $D_c(\ell)$ decreases the filtration. To describe this precisely we prove the following lemma.


\begin{lemma}
Let $f\in \mathbb{Z}_{(\ell)}\llbracket q \rrbracket$ be congruent to a nonzero modular form modulo  $\ell\geq 5$. Set $k_\ell=\frac{1}{2}(c\ell +\ell-1)$, and suppose that

\label{baselem}
    
    \be \label{cong1} w_\ell(f)\equiv 12~\left\lceil\frac{\ell-1}{24}\right\rceil \pmod{\ell-1}.\ee
    \begin{enumerate}[label=(\alph*)]
        \item If $w_\ell(f)\leq k_\ell$, then we have $w_\ell(f\mid U(\ell))\leq\frac{1}{2} (\ell -1)+\frac{c}{2}$ and $w_\ell(f\mid X_c(\ell))\leq k_\ell$.
        \item If $w_\ell(f)= \frac{1}{2}(\ell -1)+\frac{c}{2}$, then we have $w_\ell(f\mid D_c(\ell))\leq k_\ell $.
        \item If $w_\ell(f)> k_\ell$, then we have $w_\ell(f\mid X_c(\ell))< w_\ell(f)$.
    \end{enumerate}

\end{lemma}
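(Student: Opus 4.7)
My plan is to handle all three parts by a common iteration strategy: rewrite each of $U(\ell)$, $D_c(\ell)$, and $X_c(\ell)$ as at most two applications of the basic step ``multiply by $\Phi_\ell^c$, then apply $U(\ell)$'', and at each stage combine Lemma \ref{Uoperator} with the mod-$(\ell-1)$ constraint on filtrations (arising from $E_{\ell-1}\equiv 1\pmod{\ell}$ and forcing all relevant filtrations into the single residue class $w_0:=12\lceil(\ell-1)/24\rceil=\tfrac{1}{2}(c+\ell-1)\pmod{\ell-1}$).

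The first step will be to reduce $\Phi_\ell^c$ modulo $\ell$ to a level-$1$ form. Applying the Frobenius congruence $(1-q^n)^{\ell}\equiv 1-q^{n\ell}\pmod{\ell}$ twice to the product expansion of $\eta(\ell^2 z)$ yields $\eta(\ell^2 z)\equiv \eta(z)^{\ell^2}\pmod{\ell}$, and therefore $\Phi_\ell^c\equiv \eta(z)^{c(\ell^2-1)}\pmod{\ell}$. Since $\ell^2\equiv 1\pmod{24}$, the exponent $c(\ell^2-1)$ is divisible by $24$, so this congruence represents $\Phi_\ell^c$ by a level-$1$ cusp form of weight $c(\ell^2-1)/2$. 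Consequently, multiplying by $\Phi_\ell^c$ increases the filtration of any level-$1$ mod-$\ell$ form by at most $c(\ell^2-1)/2$.

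For part (a), I will substitute $w_\ell(f)\leq k_\ell$ into Lemma \ref{Uoperator} to obtain $w_\ell(f\mid U(\ell))\leq \ell+c/2+(\ell-3)/(2\ell)$, and check directly that for every prime $\ell\geq 5$ the next representative $w_0+(\ell-1)$ of the residue class $w_0\pmod{\ell-1}$ exceeds this bound by $(\ell-1)(\ell-3)/(2\ell)>0$. Hence the maximal valid representative is $w_0$, giving the first assertion. Iterating, $(f\mid U(\ell))\cdot\Phi_\ell^c$ has filtration at most $w_0+c(\ell^2-1)/2=(c\ell^2+\ell-1)/2$; a second application of Lemma \ref{Uoperator} together with the same mod-$(\ell-1)$ argument (the next representative $k_\ell+(\ell-1)$ again clears the Serre bound by $(\ell-1)(\ell-3)/(2\ell)$) yields $w_\ell(f\mid X_c(\ell))\leq k_\ell$. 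Part (b) is then exactly the second half of this calculation applied to an $f$ of filtration $w_0$.

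For part (c), I will set $w:=w_\ell(f)>k_\ell$ (so $w\geq k_\ell+(\ell-1)$) and apply Lemma \ref{Uoperator} once to obtain $w_1:=w_\ell(f\mid U(\ell))\leq \ell+(w-1)/\ell$, which is already strictly below $w$ as soon as $w>\ell+1$. Passing through multiplication by $\Phi_\ell^c$ and a second $U(\ell)$ gives $w_\ell(f\mid X_c(\ell))\leq \ell+(w_1+c(\ell^2-1)/2-1)/\ell$, and a direct computation combined with the mod-$(\ell-1)$ refinement shows this to be strictly less than $w$ for all $\ell\geq 7$, the margin being $(\ell-5)(\ell^2-1)/(2\ell^2)$. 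The main obstacle I anticipate is the tight boundary $w=k_\ell+(\ell-1)$ at $\ell=5$, where the iterated Serre bound lands exactly on $w$; closing this gap strictly will require either a sharpening of Lemma \ref{Uoperator} in the special case $\ell\mid w-1$ (which is precisely what occurs at the boundary), or a structural cancellation in the product $(f\mid U(\ell))\cdot\Phi_\ell^c$ forcing its filtration strictly below the naive upper bound $w_1+c(\ell^2-1)/2$.
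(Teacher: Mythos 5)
Your treatment of parts (a) and (b) is correct and is essentially the argument the paper gives: bound the filtration of the input, apply Lemma \ref{Uoperator}, take the floor, and then drop to the largest representative of the residue class $w_0=\tfrac{1}{2}(c+\ell-1) \pmod{\ell-1}$ (using that $w_\ell(\Phi_\ell^c)\equiv 0 \pmod{\ell-1}$ so multiplication by $\Phi_\ell^c$ preserves the class). Your margins $(\ell-1)(\ell-3)/(2\ell)$ check out, and your identification of $\Phi_\ell^c \bmod \ell$ with a level-$1$ cusp form of weight $c(\ell^2-1)/2$ is the intended reading of the paper's (typo-ridden) statement $\Phi_\ell^c\equiv\Delta^{c(\ell+c-1)/24}\in S_{c(\ell^2-1)/2}$.

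Part (c) is where the genuine gap lies, and you have correctly located it but not closed it. The paper only says part (c) ``follows in a similar manner,'' and the similar manner really does fail at $\ell=5$, $w_\ell(f)=k_5+(\ell-1)=56$: there the iterated Serre bound gives exactly $\lfloor 5+\tfrac{16+240-1}{5}\rfloor=56=w_\ell(f)$, and the congruence class modulo $\ell-1=4$ contains $56$, so no representative below $w$ is forced. Worse, the standard refinement via the theta cycle does not rescue the first step: since $56\equiv 1\pmod 5$, the identity $(f\mid U(5))^5\equiv f-\theta^4 f$ together with $w_\ell(\theta g)=w_\ell(g)+\ell+1$ whenever $\ell\nmid w_\ell(g)$ forces $w_5(f\mid U(5))=16$ \emph{exactly}, so the only possible source of strict decrease is a drop in the filtration of the product $(f\mid U(5))\cdot\Phi_5^{20}$ below $16+240$ (filtration of a product is subadditive but not additive in general, since $\widetilde{E}_{\ell-1}$ need not be irreducible). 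You name exactly these two escape routes but prove neither, so as written your argument establishes (c) only for $\ell\geq 7$; the case $\ell=5$ remains open in your proposal (and is not supplied by the paper either). To submit this as a complete proof you would need to either carry out the cancellation argument for $(f\mid U(5))\cdot\Phi_5^{20}$, or restrict the statement of (c) to $\ell\geq 7$ and handle $\ell=5$ separately.
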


\begin{proof} 
To prove the first part of (a), we utilize Lemma \ref{Uoperator} and the fact that $w_\ell$ is always either $-\infty$ or an integer (actually an even nonnegative integer, but we won't need that detail)  to get

\begin{align*}
   w_\ell\left(f\mid U(\ell)\right)&\leq \left\lfloor \ell+\frac{\frac{1}{2}(c\ell +\ell-1)-1}{\ell}\right\rfloor \\
   &=\left\lfloor \ell+\frac{c}{2}+\frac{\ell-3}{2\ell}\right\rfloor =\ell+\frac{c}{2}.
\end{align*}
Note that we also have $w_\ell(f|U(\ell)) \equiv w_\ell(f) \equiv 12~\lceil\frac{\ell-1}{24}\rceil \pmod{\ell-1}$. The maximal positive integer $\leq \ell +\frac{c}{2}$ satisfying that congruence is easily seen to be $\frac{1}{2} (\ell -1)+\frac{c}{2}$, and the result follows. The second inequality of (a) follows from the first one and from part (b), which we now turn our attention to. Utilizing the fact that  $\Phi^c_\ell(z)\equiv \Delta^\frac{c(\ell+c-1)}{24} \pmod \ell$, and the latter is in $S_{c\left(\frac{\ell^2-1}{2}\right)}$, by a similar reasoning to the above we get 
 
\begin{align*}
   w_\ell\left(f\mid X_c(\ell)\right)&\leq \left\lfloor \ell+\frac{\frac{1}{2}(\ell-1+c+c(\ell^2-1))-1}{\ell}\right\rfloor \\
   &=\left\lfloor \ell+\frac{c}{2}\ell+\frac{\ell-3}{2\ell}\right\rfloor =\ell\left(1+\frac{c}{2}\right).
\end{align*}
Since $w_\ell(\Phi_\ell^c)=\frac{c}{2}(\ell^2-1)\equiv 0 \pmod{\ell-1}$, we see that $w_\ell(f|X_c(\ell)) \equiv w_\ell(f) \pmod{\ell-1}$. The result follows since the maximal positive integer $\leq \ell\left(1+\frac{c}{2}\right)$ satisfying the congruence \eqref{cong1} is indeed $k_\ell$. Part (c) follows in a similar manner, and we omit the details for brevity. 


\end{proof}

We conclude this section with a simple lemma that enables the lifting of congruences modulo $\ell$ to congruences modulo $\ell^j$ by taking appropriate powers.

\begin{lemma}\label{lemmaCon1}
    Suppose that $f(z)\in \mathbb{Z}_{(\ell)}\llbracket q \rrbracket$ has $f(z)\equiv 1\pmod{\ell}$. Then for all $j\geq 1$, we have $f(z)^{\ell^{j-1}}\equiv1\pmod{\ell^j}$.
\end{lemma}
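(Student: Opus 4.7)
The plan is to prove the statement by induction on $j$, using the binomial expansion to leverage the divisibility properties of $\binom{\ell}{k}$ for $0<k<\ell$.

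For the base case $j=1$, the claim $f(z)^{\ell^0} = f(z) \equiv 1 \pmod{\ell}$ is precisely the hypothesis. For the inductive step, I would assume $f(z)^{\ell^{j-1}} \equiv 1 \pmod{\ell^j}$ and write
\[
f(z)^{\ell^{j-1}} = 1 + \ell^j g(z)
\]
for some $g(z) \in \mathbb{Z}_{(\ell)}\llbracket q \rrbracket$. Raising both sides to the $\ell$-th power and expanding via the binomial theorem gives
\[
f(z)^{\ell^j} = \bigl(1 + \ell^j g(z)\bigr)^\ell = 1 + \sum_{k=1}^{\ell} \binom{\ell}{k} \ell^{jk}\, g(z)^k.
\]

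The key observation is then that every term in the sum is divisible by $\ell^{j+1}$. For $1 \leq k \leq \ell-1$, the prime $\ell$ divides $\binom{\ell}{k}$, so the $k$-th term is divisible by $\ell^{jk+1}$, and $jk+1 \geq j+1$ since $k\geq 1$. For $k=\ell$, the term is $\ell^{j\ell} g(z)^\ell$, which is divisible by $\ell^{j\ell}$, and $j\ell \geq j+1$ follows from $j(\ell-1)\geq 1$ (using $j\geq 1$, $\ell \geq 2$). Hence $f(z)^{\ell^j} \equiv 1 \pmod{\ell^{j+1}}$, completing the induction.

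There is really no obstacle here; the only minor point requiring care is tracking the $\ell$-adic valuation of each binomial term and confirming that the weakest estimate (the linear $k=1$ term) still gives exactly $\ell^{j+1}$, which is the gain we need at each inductive step. Everything else is routine coefficient-wise reduction in $\mathbb{Z}_{(\ell)}\llbracket q \rrbracket$.
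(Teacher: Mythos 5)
Your proof is correct and complete; the induction on $j$ via the binomial expansion, using $\ell \mid \binom{\ell}{k}$ for $0<k<\ell$ and the valuation bound $j\ell \geq j+1$ for the top term, is exactly the standard argument. The paper states this lemma without proof, regarding it as routine, and your write-up supplies precisely the verification the authors had in mind.
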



\section{\bf Eisenstein series of level $\ell$ and Legendre character}

In this section, we recall a pair of Eisenstein series that we shall denote $E_{k,\chi}$ and $F_{k,\chi}$ of level $\ell$ and quadratic character. Boylan studied the former in \cite{BoySD} (where it was denoted by $E_{\ell, k, \chi}$) using methods from \cite{Scho} with ideas going back to Hecke \cite{Hecke27}. Further details about those Eisenstein series and the methods used to study them can be found in \cite{fcmf}. Whenever $k\geq 2$ has the same parity as $\chi$, the Eisenstein subspace in $M_k(\Gamma_0(\ell), \chi)$ has dimension $2$ (the number of cusps of $\Gamma_0(\ell)$), and $E_{k,\chi}, F_{k,\chi}$ form a basis for it. Let  the generalized Bernoulli numbers $B_{k,\chi}$ be defined by the generating series

\[
\sum_{a=1}^\ell \chi(a) \frac{te^{at}}{e^{\ell t}-1}=\sum_{k=0}^\infty B_{k,\chi}\frac{t^k}{k!}.
\]
Then we have

\begin{align}
E_{k,\chi}(z)&:=1-\frac{2k}{B_{k,\chi}}\sum_{n=1}^\infty\left(\sum_{d\mid n, d>0}\chi(d)d^{k-1}\right)q^n,\\
    F_{k,\chi}(z)&= \sum_{n=1}^\infty n^{k-1}\left(\sum_{d\mid n, d>0}\chi(d)d^{1-k}\right)q^n \in M_k(\Gamma_0(\ell),\chi_\ell).
\end{align}

For the quadratic  character $\chi=\chi_\ell$ we also recall the following evaluation connecting the value of the Dirichlet $L$-function at a posititve integer $k$ to the generalized Bernoulli numbers $B_{k,\chi}$  

\[
L(k,\chi)=\frac{-\frakg_\chi (-2\pi i)^k B_{k,\chi}}{2\ell^k k!},
\]

where $\frakg_\chi$ denotes the Gauss sum sum corresponding to $\chi=\chi_\ell$, which is defined by 

\[
\mathfrak{g}_\chi=\sum_{n=0}^{\ell-1}\chi(n)e^\frac{2\pi i n}{\ell}.
\] 

It is well-known (see Chapter 6 of \cite{IrRo} for instance) that

\[
\frakg_\chi=\begin{cases}
	\sqrt{\ell} \ \ \ \  \textrm{ if $\ell\equiv 1 \pmod 4$}, \\
	i\sqrt{\ell} \ \ \ \  \textrm{ if $\ell\equiv 3 \pmod 4$}.  
\end{cases}
\]



\begin{example}
    When $\ell=19$, $k=9$ we have
    \[
    E_{9,\chi_{19}}=\frac{19}{3708443635}\left(\frac{3708443635}{19}+q-255q^2-6560q^3+65281q^4+\dots\right)\equiv 1 \pmod{19},
    \]
    and
    \[
    F_{9,\chi_{19}}=q+255q^2+6560q^3+65281q^4+390626q^5+\dots.
    \]
\end{example}

We now consider the action of the Atkin-Lehner involution on $E_{k,\chi}, F_{k,\chi}$. Recall that if $f$ is any function on the upper half-plane and $\gamma=\left(\begin{smallmatrix}
    a&b\\
    c&d
\end{smallmatrix}\right)$ is a real matrix with positive determinant, then the weight $k$ slash action of $\gamma$ on $f$ is defined by

\be
(f|_k\gamma)(z):=(ad-bc)^{\frac{k}{2}}(cz+d)^{-k}f\left(\frac{az+b}{cz+d}\right).
\ee
The Atkin-Lehner operator corresponds to the action of $W_\ell:=\left(\begin{smallmatrix}
    0&-1\\
    \ell&0
\end{smallmatrix}\right).
$
Note that for $\left(\begin{smallmatrix}
    a&b\\
    \ell c&d
\end{smallmatrix}\right) \in \Gamma_0(\ell)$, we have 

\[
W_\ell \left(\begin{smallmatrix}
    a&b\\
    \ell c&d
\end{smallmatrix}\right) =\left(\begin{smallmatrix}
    d&-c\\
    -\ell b &a
\end{smallmatrix}\right)W_\ell.
\]
It follows that if $\psi$ is any character on $(\Z/\ell)^*$ and $f\in M_k(\Gamma_0(\ell), \psi)$, then $f|_kW_\ell \in M_k(\Gamma_0(\ell),\overline{\psi})$. In particular, when $\psi$ is either the trivial character or the quadratic character $\chi_\ell$, the space $M_k(\Gamma_0(\ell), \psi)$ is preserved by $W_\ell$.

Let $\gamma_1,\dots, \gamma_{\ell+1}$ be distinct coset representatives of SL$_2(\Z)$ over $\Gamma_0(\ell)$. Then the trace operator $\Tr:M_k^!(\Gamma_0(\ell))\rightarrow M_k^!$ is defined by

\be\label{trdef1}
\Tr(f):=\sum_{j=1}^{\ell+1}f|_k\gamma_j.
\ee
Using explicit coset representatives as in \cite{SJP}, we get 

\begin{equation}
    \text{Tr}(f)=f+\ell^{1-\frac{k}{2}}(f\mid_k W_\ell)\mid U(\ell).
\end{equation}
Note that Tr maps $M_k(\Gamma_0(\ell))$ to $M_k$.

It could be shown (for instance, using methods similar to Chapter 4 of \cite{fcmf}) that 
\[
F_{k,\chi}|_kW_\ell=\frac{\frakg_\chi B_{k,\chi}}{-2k}\ell^{-\frac{k}{2}}\chi(-1)E_{k,\chi},
\]

and 
\begin{align}\label{ew}
	E_{k,\chi}|_kW_\ell=\frac{-2k\ell^\frac{k}{2}}{\frakg_\chi B_{k,\chi}}F_{k,\chi}
\end{align}

As we shall see, in the study of modular forms of level $\ell$ and character $\chi_\ell$, the Eisenestein series $E_\chi:=E_{\frac{\ell-1}{2},\chi}$ plays a crucial role, similar to that of $E_{\ell-1}$, in the context of $\ell$-adic congruences. This stems from the fact that the denominator of $B_{\frac{\ell-1}{2},\chi}$ is \emph{exactly} divisible by $\ell$ (see \cite{Carlitz1959} for instance), hence we have 
\be \label{E1}
E_\chi\equiv 1 \pmod{\ell}. 
\ee
Setting $F_\chi:=F_{\frac{\ell-1}{2},\chi}$, we get from \eqref{ew}

\be \label{EW}
E_{\chi}|_\frac{\ell-1}{2}W_\ell =C_\ell F_{\chi},
\ee
where

\begin{equation}\label{atkC}
    C_\ell=\begin{cases}
       \frac{-(\ell-1) \ell^\frac{\ell-3}{4}}{B_{\frac{\ell-1}{2},\chi}}\in \mathbb{Q}(\sqrt{\ell})  \textrm{ if $\ell \equiv 1 \pmod 4$},  \\
      \frac{(\ell-1)  \ell^\frac{\ell-3}{4}\sqrt{-1}} {B_{\frac{\ell-1}{2},\chi}}\in \mathbb{Q}(i)  \textrm{ if $\ell \equiv 3 \pmod 4$}.
    \end{cases} 
\end{equation}
Thus, unlike the cases with trivial Nebentypus addressed in \cite{FKO} and \cite{BelmontLee}, we need to work in the ring of integers of $K_\ell:=\Q(C_\ell)$ in order to study ``congruences" modulo $\ell$. More precisely, let $\mathfrak{l}$ denote a prime ideal in the ring of integers of $K_\ell$ above $\ell$, and $v_\mathfrak{l}$ denote the valuation with respect to $\mathfrak{l}$, normalized so that $v_\mathfrak{l}(\mathfrak{l})=1$. We thus have
\begin{equation}\label{v}
   v_\mathfrak{l}( \ell)=\begin{cases}
       2  \textrm{ if $\ell \equiv 1 \pmod 4$},  \\
      1  \textrm{ if $\ell \equiv 3 \pmod 4$}.
    \end{cases} 
\end{equation}
In what follows we shall drop the subscript $\mathfrak{l}$ on $v$ whenever the context it clear. We also extend the definition of the valuation $v$ to Fourier series with rational coefficients in the standard way by setting 
\be\label{vserdef}
v\left(\sum_{n=n_0}^\infty a_n q^n\right):=\inf\{v(a_n):n\geq n_0\}.
\ee
Applying $v$ to \eqref{atkC} we get

\begin{equation}
 \label{Evalution}   
v\left(E_\chi|_\frac{\ell-1}{2}W_\ell\right)=v(C_\ell)=\frac{\ell+1}{4}v(\ell).
\end{equation}

\begin{lemma}\label{hchi}
Set    
\be
h_\chi:=E_\chi-C_\ell E_\chi|_\frac{\ell-1}{2}W_\ell=E_\chi-C_\ell^2 F_\chi \in \mathbb{Z}_{(\ell)}\llbracket q \rrbracket.
\ee
Then we have:

\begin{enumerate}
\item $h_\chi\equiv 1\pmod \ell$.
\item $
h_\chi|_\frac{(\ell-1)}{2}W_\ell=C_\ell F_\chi- \chi(-1)C_\ell E_\chi=C_\ell (F_\chi-\chi(-1)E_\chi).
$
\item $v\left(h_\chi|_\frac{(\ell-1)}{2}W_\ell\right)=\frac{(\ell+1)}{4}v(\ell)$.
\end{enumerate}
\end{lemma}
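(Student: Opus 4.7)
The plan is to verify the three assertions directly, using the two Atkin--Lehner evaluations recorded in the excerpt together with the valuation $v(C_\ell)=\tfrac{\ell+1}{4}v(\ell)$ from \eqref{Evalution} and the congruence $E_\chi\equiv 1\pmod\ell$ from \eqref{E1}.

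For part (1), I would first use \eqref{EW} to rewrite $h_\chi$ in the form $E_\chi-C_\ell^2 F_\chi$ (which the excerpt records as an alternative definition), and then check the congruence termwise. Since
\[
v(C_\ell^2)=\tfrac{\ell+1}{2}v(\ell)\geq v(\ell)
\]
for every $\ell\geq 5$, and since $F_\chi$ has integer Fourier coefficients starting at $q$, every coefficient of $C_\ell^2 F_\chi$ has $v$-valuation at least $v(\ell)$; in particular the constant term is $0$. Combined with $E_\chi\equiv 1\pmod\ell$, this yields both $h_\chi\in\mathbb{Z}_{(\ell)}\llbracket q\rrbracket$ (its coefficients are rational and $\ell$-integral) and the congruence $h_\chi\equiv 1\pmod\ell$.

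For part (2), the key auxiliary step is the companion to \eqref{EW}, namely
\[
F_\chi\big|_{\frac{\ell-1}{2}}W_\ell=\frac{\chi(-1)}{C_\ell}\,E_\chi.
\]
This can be read off directly from the identity
\[
F_{k,\chi}\big|_k W_\ell=\frac{\mathfrak{g}_\chi B_{k,\chi}}{-2k}\,\ell^{-k/2}\chi(-1)\,E_{k,\chi}
\]
stated in the excerpt (its coefficient is the reciprocal of the coefficient in \eqref{ew} times $\chi(-1)$), or obtained conceptually from the fact that $W_\ell^2=-\ell I$ acts on a weight-$k$ form by the scalar $(-1)^k$, so that applying $W_\ell$ twice to $E_\chi$ and using $(-1)^{(\ell-1)/2}=\chi(-1)$ forces the displayed relation. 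Substituting then gives
\[
h_\chi\big|_{\frac{\ell-1}{2}}W_\ell=C_\ell F_\chi-C_\ell^2\cdot\frac{\chi(-1)}{C_\ell}E_\chi=C_\ell F_\chi-\chi(-1)C_\ell E_\chi=C_\ell\bigl(F_\chi-\chi(-1)E_\chi\bigr),
\]
which is the claimed identity.

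For part (3), by multiplicativity of $v$ on products it suffices to combine $v(C_\ell)=\tfrac{\ell+1}{4}v(\ell)$ with $v\bigl(F_\chi-\chi(-1)E_\chi\bigr)=0$. The second claim is immediate from the definition \eqref{vserdef}: the constant term of $F_\chi-\chi(-1)E_\chi$ equals $0-\chi(-1)\cdot 1=-\chi(-1)\in\{\pm 1\}$, which is a unit at $\mathfrak{l}$. I do not anticipate a serious obstacle; the only non-mechanical point is identifying the Atkin--Lehner factor of $F_\chi$ as $\chi(-1)/C_\ell$, which is a short algebraic manipulation once one exploits the involution $W_\ell^2=(-1)^k\,\mathrm{id}$ on weight-$k$ forms.
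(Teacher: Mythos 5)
Your proposal is correct and follows exactly the route the paper intends: the paper's proof is the one-line remark that the lemma ``follows easily from \eqref{E1}, \eqref{EW} and \eqref{Evalution},'' and your argument is precisely that deduction carried out in detail (including the companion Atkin--Lehner evaluation $F_\chi|_{\frac{\ell-1}{2}}W_\ell=\chi(-1)C_\ell^{-1}E_\chi$, which is already recorded in the paper just before \eqref{ew}). No gaps.
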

\begin{proof}
    The proof follows easily from \eqref{E1}, \eqref{EW} and \eqref{Evalution}.
\end{proof}

Thus, in this setting, we have the following generalization of Lemme 9 of \cite{SJP}.

\begin{lemma}
\label{Valuation}
For $f\in M_\kappa(\Gamma_0(\ell),\chi_\ell)$ set $f_m:={\text Tr}(fh_\chi^{\ell^m})$, then we have
\begin{equation}\label{equ4.1}
v(f_m-f)\geq \min(v(f)+(m+1)v(\ell),v(f|_\kappa W_\ell)+\frac{v(\ell)}{2}(\ell^m-\kappa+2))
\end{equation}
\end{lemma}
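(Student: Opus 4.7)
\noindent\textbf{Proof proposal for Lemma \ref{Valuation}.}

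The plan is to mimic Serre's argument for Lemme 9 of \cite{SJP}, replacing $E_{\ell-1}$ by the weight $\frac{\ell-1}{2}$ analogue $h_\chi$ constructed in Lemma \ref{hchi} and tracking the extra factors of $v(\ell)$ introduced by the quadratic Nebentypus. Set $K:=\kappa+\frac{\ell-1}{2}\ell^m$, so that $fh_\chi^{\ell^m}\in M_K(\Gamma_0(\ell),\chi_\ell)$ (we use that $\chi_\ell^{\ell^m}=\chi_\ell$ since $\chi_\ell$ is quadratic and $\ell^m$ is odd). Using the explicit trace formula, I would write
\begin{equation*}
f_m-f \;=\; \bigl(fh_\chi^{\ell^m}-f\bigr) \;+\; \ell^{1-K/2}\,\bigl((fh_\chi^{\ell^m})\mid_K W_\ell\bigr)\mid U(\ell),
\end{equation*}
and then bound each summand separately; the claim will then follow from the ultrametric inequality.

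For the first summand, I would invoke Lemma \ref{hchi}(1) together with Lemma \ref{lemmaCon1} applied with $j=m+1$ to obtain $h_\chi^{\ell^m}\equiv 1\pmod{\ell^{m+1}}$, hence $v(fh_\chi^{\ell^m}-f)\geq v(f)+(m+1)v(\ell)$, which is exactly the first term of the minimum. For the second summand, I would use the multiplicativity of the slash action under products,
\begin{equation*}
(fh_\chi^{\ell^m})\mid_K W_\ell \;=\; (f\mid_\kappa W_\ell)\cdot\bigl(h_\chi\mid_{(\ell-1)/2} W_\ell\bigr)^{\ell^m},
\end{equation*}
together with Lemma \ref{hchi}(3), which gives $v\bigl((h_\chi\mid_{(\ell-1)/2}W_\ell)^{\ell^m}\bigr)=\ell^m\cdot\tfrac{\ell+1}{4}v(\ell)$ by the Gauss-norm multiplicativity of $v$ on formal series. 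Combining with the prefactor $v(\ell^{1-K/2})=(1-\kappa/2-\tfrac{(\ell-1)\ell^m}{4})v(\ell)$ and observing the cancellation
\begin{equation*}
\tfrac{\ell^m(\ell+1)}{4}-\tfrac{\ell^m(\ell-1)}{4}\;=\;\tfrac{\ell^m}{2},
\end{equation*}
the total valuation of $\ell^{1-K/2}(fh_\chi^{\ell^m})\mid_K W_\ell$ is at least $v(f\mid_\kappa W_\ell)+\tfrac{v(\ell)}{2}(\ell^m-\kappa+2)$. Applying $U(\ell)$ only ever weakly increases $v$ (it selects a sub-collection of the original coefficients), so the same lower bound carries through.

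The most delicate point is the arithmetic balancing in the second summand: the factor $\tfrac{\ell+1}{4}$ supplied by $v(h_\chi\mid W_\ell)$ must be paired against the $-\tfrac{\ell-1}{4}\ell^m$ coming from $\ell^{1-K/2}$, and one needs the cancellation to collapse to $\tfrac{\ell^m}{2}v(\ell)$ exactly; any off-by-one in the weight bookkeeping would ruin the bound. Beyond this, all ingredients are already in place: $h_\chi\equiv 1\pmod\ell$ and its explicit valuation at the other cusp (Lemma \ref{hchi}), the behavior of the slash action on products, Lemma \ref{lemmaCon1} for lifting the congruence to modulus $\ell^{m+1}$, and the trivial bound $v(g\mid U(\ell))\geq v(g)$ from the definition \eqref{vserdef}.
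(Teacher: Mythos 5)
Your proposal is correct and follows essentially the same route as the paper: the identical decomposition $f_m-f=f(h_\chi^{\ell^m}-1)+\ell^{1-K/2}(fh_\chi^{\ell^m})\mid_K W_\ell\mid U(\ell)$, the first term bounded via Lemma \ref{lemmaCon1} and Lemma \ref{hchi}(1), and the second via Lemma \ref{hchi}(3) with exactly the cancellation $\tfrac{\ell^m(\ell+1)}{4}-\tfrac{\ell^m(\ell-1)}{4}=\tfrac{\ell^m}{2}$ and the observation that $U(\ell)$ does not decrease $v$. No gaps.
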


\begin{proof}
Let $k_m$ denote the weight of $f_m$, namely $k_m=\kappa + \frac{(\ell-1)\ell^m}{2}$. We have 

\begin{align*}
f_m-f&=fh_\chi^{\ell^m}+\ell^{1-\frac{k_m}{2}}fh_\chi^{\ell^m}|_{k_m}W_\ell|U(\ell)-f\\
&=f(h_\chi^{\ell^m}-1)+\ell^{1-\frac{k_m}{2}}fh_\chi^{\ell^m}|_{k_m}W_\ell|U(\ell).
\end{align*}
So, 

\[
v(f_m-f)\geq \min(v(f(h_\chi^{\ell^m}-1)),v(\ell^{1-\frac{k_m}{2}}fh_\chi^{\ell^m}|_{k_m}W_\ell|U(\ell))).
\]
By  Lemma \ref{hchi} (1) and Lemma \ref{lemmaCon1}, the first part gives
\[
v(f(h_\chi^{\ell^m}-1))=v(f)+v(h_\chi^{\ell^m}-1)\geq v(f)+(m+1)v(\ell),
\]
while the second part leads to
\begin{align*}
v(\ell^{1-\frac{k_m}{2}}fh_\chi^{\ell^m}|_{k_m}W_\ell|U(\ell))&\geq v(\ell^{1-\frac{k_m}{2}}fh_\chi^{\ell^m}|_{k_m}W_\ell))
=\left(1-\frac{k_m}{2}\right)v(\ell)+v(f|_\kappa W_\ell)+\ell^m\frac{(\ell+1)}{4}v(\ell)\\
&=v(f|_\kappa W_\ell)+v(\ell)\left(1+\frac{\ell^m (\ell+1)}{4}-\frac{\kappa}{2}-\frac{\ell^m(\ell-1)}{4} \right)\\
&=v(f|_\kappa W_\ell)+\frac{v(\ell)}{2}(\ell^m-\kappa+2),
\end{align*}
(where the first inequality follows since $v(F|U(\ell))\geq v(F)$ for any $F\in \Z_{(\ell)}\llbracket q\rrbracket$, as the coefficients of $F|U(\ell)$ are a subset of those of $F$) and the result follows.
\end{proof}


\section{\bf $\ell$-adic properties of $U(\ell)$ and $D_c(\ell)$ and higher congruences for $R_\ell(b;z)$}
In this section, we show that the functions $R_\ell(b;z)$, which are weakly holomorphic modular forms with Nebentypus $\chi_\ell=\left(\frac{\bullet}{\ell}\right)$, are congruent modulo $\ell$  to a cusp form of level $1$. 
Following \cite{FKO}, we define

\begin{equation}
    A_\ell(z):=\frac{\eta^\ell(z)}{\eta(\ell z)}.
\end{equation}
Using Lemma \ref{etaquo}, we see that $A_\ell(z)\in M_\frac{\ell-1}{2}(\Gamma_0(\ell),\chi_\ell)$. Also, for $ m\geq 1$ we have 

\begin{equation}\label{amodlm}
    A_\ell(z)^{\ell^{m-1}}\equiv 1 \pmod{\ell^m}.
    \end{equation}


\begin{proposition}
\label{Rl0congru}
   For $m\geq 1$,  $R_\ell(0;z)$ is congruent modulo $\ell^m$ to a cusp form in $ S_{\frac{1}{2}\ell^{m-1}(\ell-1)+\frac{c}{2}}\cap  \mathbb{Z}\llbracket q \rrbracket$.

\end{proposition}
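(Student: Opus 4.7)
The plan is to apply Lemma \ref{Valuation} directly to $f := R_\ell(0;z)$ with $\kappa := c/2$ and with the exponent in the lemma taken to be $m-1$, producing the candidate level-$1$ cusp form
\[
f_{m-1} := \Tr\bigl(R_\ell(0;z)\cdot h_\chi^{\ell^{m-1}}\bigr) \in S_k, \qquad k = \tfrac{c}{2} + \tfrac{(\ell-1)\ell^{m-1}}{2},
\]
and then to verify that the lemma's error bound is strong enough to give a congruence modulo $\ell^m$. First I would verify that $R_\ell(0;z) = \eta(\ell z)\eta(z)^{c-1}$ is genuinely a cusp form in $S_{c/2}(\Gamma_0(\ell),\chi_\ell)$: the formula \eqref{cuspord} yields orders of vanishing $(c+\ell-1)/24$ and $((c-1)\ell+1)/24$ at the two cusps of $\Gamma_0(\ell)$, both positive integers since $c \geq 2$ for $\ell\geq 5$. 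Because $\ell^{m-1}$ is odd, $h_\chi^{\ell^{m-1}}$ has character $\chi_\ell^{\ell^{m-1}} = \chi_\ell$, so the product $R_\ell(0;z)\cdot h_\chi^{\ell^{m-1}}$ lies in $S_k(\Gamma_0(\ell))$ with trivial character, which means the trace to level $1$ is well-defined and remains cuspidal.

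Lemma \ref{Valuation} then yields
\[
v(f_{m-1} - R_\ell(0;z)) \geq \min\!\left(v(R_\ell(0;z)) + m\,v(\ell),\ v(R_\ell(0;z)|_{c/2} W_\ell) + \tfrac{v(\ell)}{2}\bigl(\ell^{m-1} - \tfrac{c}{2} + 2\bigr)\right).
\]
The first term equals $m\,v(\ell)$ since $R_\ell(0;z)\in \mathbb{Z}\llbracket q\rrbracket$. For the second term I would use $\eta(-1/\tau)=(-i\tau)^{1/2}\eta(\tau)$ to derive the explicit identity
\[
R_\ell(0;z)|_{c/2} W_\ell = \ell^{(c-2)/4}(-i)^{c/2}\,\eta(z)\eta(\ell z)^{c-1}.
\]
The eta-product on the right has unit leading coefficient and integer $q$-coefficients, so $v(R_\ell(0;z)|_{c/2}W_\ell) = \tfrac{c-2}{4}v(\ell)$, and a short algebraic simplification collapses the second term of the minimum to $\tfrac{v(\ell)(\ell^{m-1}+1)}{2}$. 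Since $\ell^{m-1}+1 \geq 2m$ holds for all $\ell\geq 5$ and $m\geq 1$, the minimum is $m\,v(\ell)$ and therefore $f_{m-1} \equiv R_\ell(0;z) \pmod{\ell^m}$.

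The main technical obstacle is the explicit $W_\ell$-transformation of the eta-quotient, in particular carefully tracking the algebraic factors (powers of $i$ and $\sqrt{\ell}$ living in $\mathcal{O}_{K_\ell}$) so that the $\mathfrak{l}$-adic valuation is pinned down to be precisely $(c-2)v(\ell)/4$. Finally, to upgrade $f_{m-1}$ to a form in $S_k \cap \mathbb{Z}\llbracket q\rrbracket$ rather than in $\mathcal{O}_{K_\ell}\llbracket q\rrbracket$, I would invoke the $\mathbb{Z}$-structure on $S_k$: since $f_{m-1}\equiv R_\ell(0;z)\pmod{\ell^m}$ and $R_\ell(0;z)\in\mathbb{Z}\llbracket q\rrbracket$, expressing $f_{m-1}$ in a $\mathbb{Z}$-basis of $S_k$ with $\mathbb{Z}_{(\ell)}$-coefficients and truncating those coefficients modulo $\ell^m$ to ordinary integers produces the required cusp form.
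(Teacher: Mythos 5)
Your route---multiplying by $h_\chi^{\ell^{m-1}}$ and invoking Lemma \ref{Valuation} with the exponent $m-1$---is genuinely different from the paper's proof, which instead multiplies by the eta-quotient $A_\ell(z)^{\ell^{m-1}}=(\eta(z)^\ell/\eta(\ell z))^{\ell^{m-1}}$, computes the trace error term by hand, and treats $m=1$ separately via $R_\ell(0;z)\equiv\Delta^{\lceil(\ell-1)/24\rceil}\pmod\ell$. Where your argument applies, the bookkeeping is correct: the identity $R_\ell(0;z)|_{c/2}W_\ell=\ell^{(c-2)/4}(-i)^{c/2}\eta(z)\eta(\ell z)^{c-1}$ agrees with the paper's computation, the second term of the minimum does collapse to $\tfrac{v(\ell)}{2}(\ell^{m-1}+1)\geq m\,v(\ell)$, and the passage from $\mathcal{O}_{K_\ell}$-coefficients to $\Z$-coefficients is legitimate once you add the (unstated but standard) observation that $\Tr$ preserves rationality of $q$-expansions, so the coordinates of $f_{m-1}$ in an echelonized integral basis of $S_k$ really do lie in $\Z_{(\ell)}$.

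There is, however, a genuine gap: the claim that $c\geq 2$ for all $\ell\geq 5$ is false. If $\ell\equiv 1\pmod{24}$ (e.g.\ $\ell=73$) then $c=0$, and $R_\ell(0;z)=\eta(\ell z)/\eta(z)$ has order $\tfrac{1-\ell}{24}<0$ at the cusp $0$ by \eqref{cuspord}. It is therefore only weakly holomorphic, it does not lie in $M_{c/2}(\Gamma_0(\ell),\chi_\ell)$, Lemma \ref{Valuation} does not apply as stated, and $\Tr\bigl(R_\ell(0;z)h_\chi^{\ell^{m-1}}\bigr)$ need not even be holomorphic at $\infty$: $h_\chi$ is a holomorphic Eisenstein combination, so it cannot absorb the pole at the cusp $0$, which $W_\ell$ followed by $U(\ell)$ then transports into the $q$-expansion at $\infty$. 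This is precisely what the paper's choice of auxiliary factor buys: $A_\ell$ vanishes to order $\tfrac{\ell^2-1}{24}$ at the cusp $0$, so $R_\ell(0;z)A_\ell^{\ell^{m-1}}$ is an honest cusp form on $\Gamma_0(\ell)$ for every $\ell\geq5$ and $m\geq2$. To repair your argument you would need either to restrict to $\ell\not\equiv1\pmod{24}$ and handle the remaining primes separately, or to insert a factor such as a power of $A_\ell$ that kills the pole at $0$ before taking the trace.
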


\begin{proof} 

For $m=1$, we have 
\begin{align*}
        R_\ell(0;z)&= \eta(\ell z)\eta(z)^{c-1}\\&\equiv\eta(z)^\ell\eta(z)^{c-1}\equiv\Delta(z)^{\frac{\ell+c-1}{24}}\equiv \Delta(z)^{\lceil\frac{\ell-1}{24}\rceil} \pmod \ell.
\end{align*}
  
For $m\geq 2$, we see from Lemma \ref{etaquo} that $
R_\ell(0;z)A_\ell(z)^{\ell^{m-1}}\in M_{\frac{1}{2}\ell^{m-1}(\ell-1)+\frac{c}{2}}(\Gamma_0(\ell))\cap \Z\llbracket q\rrbracket$,       
and $\text{Tr}\left( R_\ell(0;z)A_\ell(z)^{\ell^{m-1}}\right)$ equals

\begin{equation}\label{trr0}
     R_\ell(0;z)A_\ell(z)^{\ell^{m-1}}+\ell^{1-\frac{{\ell^{m-1}(\ell-1)+c}}{4}}\times \left( R_\ell(0;z)\mid_{\frac{c}{2}}W_\ell \cdot A_\ell(z)^{\ell^{m-1}}\mid_{\frac{\ell^{m-1}(\ell-1)}{2}}W_\ell \right)\mid U(\ell) .
\end{equation}

The first term is congruent to $ R_\ell(0;z)$ modulo $\ell^m$ by \eqref{amodlm}, so it suffices to show the second term has valuation $v$ at least $mv(\ell)$.
Standard calculations using the transformation properties of $\eta(z)$ give:

\[
A_\ell(z)^{\ell^{m-1}}|_{\ell^{m-1}(\ell-1)/2}W_\ell=\ell^{\frac{\ell^{m}+\ell^{m-1}}{4}}(-i)^{\ell^{m-1}(\ell-1)/2}\left(\frac{\eta\left(\ell z\right)^\ell}{\eta\left( z\right)}\right)^{\ell^{m-1}}, 
\]

\begin{align*}
       R_\ell(0;z)|_{\frac{c}{2}}W_\ell&= \ell^{\frac{c}{4}}(\ell z)^{-c/2}  \cdot \eta\left(\frac{-1}{z}\right)\eta\left(\frac{-1}{\ell z}\right)^{c-1}\\
       &= \ell^{\frac{c}{4}-\frac{1}{2}}  \cdot(-i)^{c/2} \eta(z)\eta(\ell z)^{c-1}.
\end{align*}
Thus the second term of \eqref{trr0} is equal to $\ell^{\frac{1}{2}\ell^{m-1}+\frac{1}{2}}S,$ where 
\[
S:=(-i)^\frac{\ell^{m-1}(\ell-1)+c}{2}\left(\frac{\eta(\ell z)^{\ell}}{\eta(z)}\right)^{\ell^{m-1}} \eta(z)\eta(\ell z)^{c-1}|U(\ell)
\]
is a $q$-series with integer coefficients (note that the exponent on $(-i)$ is always even). It follows that  
\[
v(\ell^{\frac{1}{2}\ell^{m-1}+\frac{1}{2}}S)\geq \frac{1}{2}(\ell^{m-1}+1)v(\ell)\geq mv(\ell).
\]
Hence

\begin{equation*}
        \ell^{1-\frac{{\ell^{m-1}(\ell-1)+c}}{4}}\times \left( R_\ell(0;z)\mid_{\frac{c}{2}}W_\ell \cdot A_\ell(z)^{\ell^{m-1}}\mid_{\frac{\ell^{m-1}(\ell-1)}{2}}W_\ell \right)\mid U(\ell)\equiv0\pmod{\ell^m}.
\end{equation*}

\end{proof}


\begin{lemma}
\label{baseseclem}
Let $\ell \geq 5$ be prime, and suppose that $\Psi(z)\in \mathbb{Z}_{(\ell)}\llbracket q \rrbracket$ such that for all $ 1\leq j\leq m$ there exists $g_j(z)\in M_{\frac{1}{2}\ell^{j-1}(\ell-1
)+\frac{c}{2}}\cap \mathbb{Z}_{(\ell)}\llbracket q \rrbracket$ with $\Psi(z)\equiv g_j(z)\pmod{\ell^j}$. Set $\calA:=\{(5,3),(7,2),(11,2),(13,2)\}$. Then for all $ 1\leq j\leq m$, there exists $h_j(z) \in 
 S_{ k_\ell(j)}\cap\mathbb{Z}_{(\ell)}\llbracket q \rrbracket$ such that $\Psi(z)\mid D_c(\ell)\equiv h_j(z)\pmod{\ell^j}$
where 

\begin{equation}\label{klj}
    k_\ell(j)=\begin{cases}
        \frac{1}{2}(c\ell+\ell-1)  \textrm{ if $j=1$}, \\
        \frac{1}{2} \ell^{j-1}(\ell-1)+\frac{c}{2} \textrm{ if $j\geq 2$ and $(\ell,j)\nin \calA\cup\{(5,2)\}$},\\
        \frac{1}{2} \ell^{j}(\ell-1)+\frac{c}{2} \textrm{ if $(\ell,j)\in \calA$},\\
        k_5(4)=260 \textrm{ if $(\ell,j)=(5,2)$}.
    \end{cases} 
\end{equation}
\end{lemma}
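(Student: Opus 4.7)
Since $D_c(\ell) = (\bullet\cdot\Phi_\ell^c)\mid U(\ell)$ is $\mathbb{Z}$-linear on formal power series, the hypothesis $\Psi\equiv g_j\pmod{\ell^j}$ gives $\Psi\mid D_c(\ell)\equiv g_j\mid D_c(\ell)\pmod{\ell^j}$. Hence for each $j$ it suffices to locate a level-$1$ cusp form $h_j\in S_{k_\ell(j)}$ congruent to $g_j\mid D_c(\ell)$ modulo $\ell^j$.

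The case $j=1$ is immediate from Lemma \ref{baselem}(b): the filtration of $g_1$ equals its weight $(\ell-1)/2+c/2$, matching the hypothesis of the lemma, so $w_\ell(g_1\mid D_c(\ell))\leq k_\ell(1)$. Cuspidality follows because $\Phi_\ell^c$ begins at $q^{c(\ell^2-1)/24}$, so $(g_1\Phi_\ell^c)\mid U(\ell) = g_1\mid D_c(\ell)$ vanishes at $\infty$, the unique cusp of $\mathrm{SL}_2(\mathbb{Z})$.

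For $j\geq 2$, set $F_j := g_j\mid D_c(\ell) \in M^!_{k_j}(\Gamma_0(\ell))$; both $g_j$ and $\Phi_\ell$ have trivial character on their respective groups (by Lemma \ref{etaquo}), so $F_j$ has trivial character. The plan is to show $F_j\equiv\mathrm{Tr}(F_j)\pmod{\ell^j}$, where $\mathrm{Tr}$ is the trace $\Gamma_0(\ell)\to\mathrm{SL}_2(\mathbb{Z})$, yielding a level-$1$ weakly holomorphic form of weight $k_j$. Since
\[
\mathrm{Tr}(F_j) - F_j = \ell^{1 - k_j/2}\bigl(F_j\mid_{k_j}W_\ell\bigr)\mid U(\ell)
\]
and $U(\ell)$ does not decrease $v$, it suffices to prove $v(F_j\mid_{k_j}W_\ell)\geq (j-1+k_j/2)\,v(\ell)$. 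Decomposing via the identity
\[
(f\mid U(\ell))\mid_{k_j}W_\ell = \ell^{k_j/2-1}\sum_{a=0}^{\ell-1} f\mid_{k_j}\left(\begin{smallmatrix}a\ell & -1\\ \ell^2 & 0\end{smallmatrix}\right)
\]
applied to $f=g_j\Phi_\ell^c$, the principal term ($a=0$) is $\ell^{k_j/2-1}(g_j\Phi_\ell^c)\mid_{k_j}W_{\ell^2}$, which, via the level-$1$ identity $g_j\mid_{k_j}W_{\ell^2}=\ell^{k_j}g_j(\ell^2 z)$ and the direct computation $\Phi_\ell^c\mid_0 W_{\ell^2} = \ell^{-c}\Phi_\ell^{-c}$, evaluates to $\ell^{3k_j/2-1-c}g_j(\ell^2 z)/\Phi_\ell^c$, contributing $v\geq (3k_j/2-1-c)v(\ell)$. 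The non-principal terms, after writing $\left(\begin{smallmatrix}a\ell & -1\\ \ell^2 & 0\end{smallmatrix}\right) = \gamma_a\left(\begin{smallmatrix}\ell & -\alpha_a\\ 0 & \ell\end{smallmatrix}\right)$ with $\gamma_a\in\Gamma_0(\ell)$, become translates of $g_j\cdot(\Phi_\ell^c\mid_0\gamma_a^{-1})$, whose valuations are controlled via the cusp-order data for $\Phi_\ell^c$ on $\Gamma_0(\ell^2)$. Aggregating, the bound succeeds whenever $k_j\geq j+c$; filtration (Lemma \ref{Uoperator}) then reduces the level-$1$ weight to exactly $k_\ell(j)=k_j$, and cuspidality is inherited from the vanishing of $F_j$ at $\infty$.

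The exceptional cases $(\ell,j)\in\calA\cup\{(5,2)\}$ are precisely those small $(\ell,j)$ for which the generic threshold fails. In these cases we multiply $F_j$ by $E_{\ell-1}^{r}\equiv 1\pmod{\ell^j}$ for a suitable $r$ (level $1$, so it commutes with $\mathrm{Tr}$), raising the effective weight to the enlarged $k_\ell(j)$ in \eqref{klj} and supplying the extra $\ell$-adic slack. \emph{The main obstacle} is the Atkin--Lehner/coset computation behind the valuation bound: balancing the positive powers of $\ell$ from $g_j\mid_{k_j}W_{\ell^2}$ against the negative ones from $\Phi_\ell^{-c}$ and the $\ell^{1-k_j/2}$ prefactor, while simultaneously controlling the non-principal coset contributions, is delicate, and the precise break-point determines the exceptional list $\calA\cup\{(5,2)\}$.
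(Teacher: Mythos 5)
Your reduction to $g_j\mid D_c(\ell)$ and your treatment of $j=1$ match the paper, but for $j\geq 2$ your strategy --- trace $F_j=g_j\mid D_c(\ell)$ directly at weight $k_j$ and bound $v(F_j\mid_{k_j}W_\ell)$ by a coset computation --- is not the paper's argument, and it has a genuine gap at exactly the step you flag as the main obstacle. The needed inequality is $v\bigl(\ell^{1-k_j/2}(F_j\mid_{k_j}W_\ell)\bigr)\geq j\,v(\ell)$. Your principal ($a=0$) term is fine, but each non-principal term carries the factor $\ell^{k_j/2-1}\cdot\ell^{-c}$ coming from the expansion of $\Phi_\ell^c$ at the cusps $a/\ell$ (where the slash of $\Phi_\ell^c$ by $\gamma_a$ has leading coefficient of valuation $-c\,v(\ell)$), so after multiplying by $\ell^{1-k_j/2}$ a term-by-term estimate only gives valuation $\geq -c\,v(\ell)$, nowhere near $j\,v(\ell)$; you would need massive cancellation in the sum over $a$, which you neither prove nor make plausible. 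Your asserted threshold ``$k_j\geq j+c$'' is also inconsistent with the statement being proved: for $(\ell,j)=(7,2)$ one has $k_2=30\geq 20=j+c$, yet $(7,2)\in\calA$. The actual break-point in the paper is the inequality $\tfrac12(\ell^{j-2}(\ell-1)-\tfrac{3c}{2}+2)\geq j$ coming from \eqref{val-diff-trace}, which is what produces $\calA\cup\{(5,2)\}$.

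The fix you propose for the exceptional cases --- multiplying by $E_{\ell-1}^r\equiv 1\pmod{\ell^j}$ --- provably gains nothing: since $E_{\ell-1}\mid_{\ell-1}W_\ell=\ell^{(\ell-1)/2}E_{\ell-1}(\ell z)$, the extra valuation $\tfrac{\ell-1}{2}v(\ell)$ per copy is exactly cancelled by the corresponding drop in the prefactor $\ell^{1-k/2}$. This is precisely why the paper builds $h_\chi$ from the weight-$\tfrac{\ell-1}{2}$ Eisenstein series $E_\chi$ with Nebentypus $\chi_\ell$: by Lemma \ref{hchi}, $v(h_\chi\mid_{\frac{\ell-1}{2}}W_\ell)=\tfrac{\ell+1}{4}v(\ell)$, a net gain of $\tfrac12 v(\ell)$ per copy, and Lemma \ref{Valuation} converts $\ell^{j-1}$ copies into the required slack. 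Moreover, the paper does not trace $g_j\mid D_c(\ell)$ at weight $k_j$ at all: it telescopes $g_j\mid D_c(\ell)$ into the four summands of \eqref{4parts} using $g_{j-1}$, $E_{\ell-1}$, $E_\chi$ and $A_\ell$ --- this is where the hypothesis that $\Psi\equiv g_j\pmod{\ell^j}$ holds for \emph{all} $j\leq m$ simultaneously enters, a hypothesis your argument never uses --- and the trace is applied only to the weight-$\tfrac c2$, character-$\chi_\ell$ form $\calF_j=(g_{j-1}A_\ell^{-\ell^{j-2}})\mid D_c(\ell)$ after twisting by $h_\chi^{\ell^{j-1}}$, the remaining summands being divisible by $\ell^{j-1}$. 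The exceptional cases are then handled not by an ad hoc weight increase but by invoking the result at level $j+1$ (or modulo $5^4$) and restricting modulo $\ell^j$, which is what yields the specific weights in \eqref{klj}. Without the telescoping and the $h_\chi$-twist, your valuation bound does not close.
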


\begin{proof} 
Note that $k_\ell(j)\equiv \frac{c+\ell-1}{2}\equiv 12 \left\lceil \frac{\ell-1}{24}\right\rceil$ so \eqref{cong1} is satisfied. Thus the base case $j=1$ follows by Lemma \ref{baselem}(b). For $j>1$ we have the following congruence for $\Psi(z)|D_c(\ell)$. 
 
\begin{equation}
\label{4parts}
    \begin{split}
    \Psi(z)\mid D_c(\ell)&\equiv g_j(z)\mid D_c(\ell)\equiv\left(g_j(z)-g_{j-1}E_{\ell-1}(z)^{\frac{1}{2}\ell^{j-2}(\ell-1)}\right)\mid D_c(\ell)\\&+\left( g_{j-1}E_{\ell-1}(z)^{\frac{1}{2}\ell^{j-2}(\ell-1)}-g_{j-1}(z)E_{\ell-1}(z)^{\frac{1}{2}\ell^{j-2}(\ell-1)}\frac{E_{\chi}(z)^{\ell^{j-2}}}{A_{\ell}(z)^{\ell^{j-2}}}\right)\mid D_c(\ell) \\& +\left(g_{j-1}(z)E_{\ell-1}(z)^{\frac{1}{2}\ell^{j-2}(\ell-1)}\frac{E_{\chi}(z)^{\ell^{j-2}}}{A_{\ell}(z)^{\ell^{j-2}}}-g_{j-1}(z)\frac{E_{\chi}(z)^{\ell^{j-1}}}{A_{\ell}(z)^{\ell^{j-2}}}\right)\mid D_c(\ell)\\&+\left( \frac{g_{j-1}(z)}{A_{\ell}(z)^{\ell^{j-2}}}E_{\chi}(z)^{\ell^{j-1}}\right) \mid D_c(z)\pmod{\ell^j}.
    \end{split}
\end{equation}


Since $E_{\chi}(z)^{\ell^{j-1}}\equiv 1 \pmod{\ell^j}$, $g_{j-1}(z)\in M_{\frac{1}{2}\ell^{j-2}(\ell-1)+\frac{c}{2}}$ and $A_{\ell}(z)^{\ell^{j-2}}\in M_{\frac{1}{2}\ell^{j-2}(\ell-1)}(\Gamma_0(\ell),\chi_\ell)$, then the fourth summand  simplifies $\pmod{\ell^j}$ to 



\begin{equation}
\label{main1}
   \mathcal{F}_j(z):=\left( \frac{g_{j-1}(z)}{A_{\ell}(z)^{\ell^{j-2}}}\right)\mid D_c(\ell)\in M_{\frac{c}{2}}(\Gamma_0(\ell),\chi_\ell).
\end{equation}
The following proposition addresses $\calF_j \pmod{\ell^j}$.

\begin{proposition}
\label{Fj} For prime $\ell\geq 5$ and integer $j\geq 2$,
    the form $\mathcal{F}_j(z)$ is congruent modulo $\ell^j$ to a form in $S_{\frac{1}{2}\ell^{j-1}(\ell-1)+\frac{c}{2}}\cap\mathbb{Z}_{(\ell)}\llbracket q \rrbracket$, except possibly for $(\ell,j)\in \{(5,2), (5,3), (7,2), (11,2), (13,2)\}$.
\end{proposition}

\begin{remark}
 The exceptional cases of Proposition \ref{Fj} are what lead to the higher weights $k_\ell(j)$ for $(\ell,j) \in \calA\cup\{(5,2)\}$ in \eqref{klj}, which are simply the weights $k_\ell(j+1)$ or $k_5(4)$ where the congruences hold modulo $\ell^{j+1}$ or $5^4$ and hence modulo $\ell^j$.
 However, often times there are intermediate weights less than  $k_\ell(j+1)$ where the congruence modulo $\ell^j$ will be satisfied. Our main interest is in the forms $R_\ell(b;z)$ and we see that, for example 

\be\label{53ex} 
R_{5}(1;z)=R_{5}(0;z)\mid D_{20}(5) =\frac{\eta(5^2z)^{20}\eta(5z)}{\eta(z)}\mid U(5)\equiv 5\Delta(z)^5\pmod{5^3}.
\ee
The congruence \eqref{53ex} can also be viewed mod $5^2$, and it takes place in weight $60$, which actually is less than $k_5(4)=260$ guaranteed in the Proposition for a general form.  For the other cases we also have

\begin{align*}
  R_{7}(1;z)&\equiv 7\Delta(z)^6\pmod{7^2},\\
  R_{11}(1;z)&\equiv 11\Delta(z)^7E_{88}(z) \pmod{11^2},\\
R_{13}(1;z)&\equiv 11\Delta^7E_{156}+130\Delta^8 E_{144} \pmod{13^2}.
  \end{align*}
 
\end{remark}

\begin{proof}[Proof of Proposition \ref{Fj}]
   We have 

\begin{equation*}
       \mathcal{F}_j(z)\equiv \mathcal{F}_j(z)h_\chi(z)^{\ell^{j-1}}\pmod{\ell^j}.
\end{equation*}
The form on the right hand side is in $M_{\frac{1}{2}\ell^{j-1}(\ell-1)+\frac{c}{2}}^!(\Gamma_0(\ell))\cap\mathbb{Z}_{(\ell)}
\llbracket q\rrbracket$. Since $\Tr(M_k^!(\Gamma_0(\ell)))\subset M_k^!$, we see that $\Tr\left(\mathcal{F}_j(z)h_{\chi}(z)^{\ell^{j-1}}\right)$ is on SL$_2(z)$ with weight $\frac{1}{2}\ell^{j-1}(\ell-1)+\frac{c}{2}$. Hence it suffices to show that 

\begin{equation*}
    \mathcal{F}_j(z)\equiv\text{Tr}\left(\mathcal{F}_j(z)h_{\chi}(z)^{\ell^{j-1}}\right)\pmod{\ell^j}.
\end{equation*} 

We apply Lemma \ref{Valuation} with $m=j-1$ and $f=\calF_j=g_{j-1}A_\ell^{-\ell^{j-2}}\Phi_\ell^c\mid U(\ell)$, which is of weight $\kappa=\frac{c}{2}$. Since $f$ has $\ell$-integral coefficients, we have $v(f)\geq 0$ and the first part of \eqref{equ4.1} is $\geq j v(\ell)$. For the second part, standard computations as in the proof of Proposition 3.2 in \cite{BMJ} give




\begin{align*}
    g_{j-1}&\mid_{\frac{1}{2}\ell^{j-2}(\ell-1)+\frac{c}{2}}\left(\left(\begin{array}{cc}
    1&0  \\
     0&\ell 
\end{array}\right)\left(\begin{array}{cc}
    0 &-1  \\
     \ell&0
\end{array}\right)\right)=\ell^{\frac{1}{2}\ell^{j-2}(\ell-1)+\frac{c}{2}}g_{j-1}(\ell^2z),
\end{align*}

\begin{align*}
    A_\ell(z)^{\ell^{j-2}}&\mid_{\frac{1}{2}\ell^{j-2}(\ell-1)}\left(\left(\begin{array}{cc}
    1&0  \\
     0&\ell 
\end{array}\right)\left(\begin{array}{cc}
    0 &-1  \\
     \ell&0
\end{array}\right)\right)=\ell^{\frac{1}{2}\ell^{j-1}}i^{-\frac{1}{2}\ell^{j-2}(\ell-1)}\left(\frac{\eta\left(\ell^2z\right)^\ell}{\eta\left(\ell z\right)}\right)^{\ell^{j-2}},
\end{align*}

\begin{align*}
    \Phi_\ell^c(z)&\mid_0\left( \left(\begin{array}{cc}
    1 &0  \\
     0&\ell 
\end{array}\right)\left(\begin{array}{cc}
    0 &-1  \\
     \ell&0 
\end{array}\right)\right)=\frac{1}{\ell^c\Phi_\ell^c(z)}.
\end{align*}
Hence
\begin{equation} 
\label{fW}
v(\mathcal{F}_j|_\kappa W_\ell)=v(\ell)\left(\frac{\ell^{j-2}(\ell-1)+c}{2}-\ell^{j-2}\frac{\ell}{2}-c\right)=\frac{-v(\ell)}{2}(\ell^{j-2}+c).
\end{equation}

Thus, we finally have

\begin{equation}\label{val-diff-trace}
v\left(\text{Tr}\left(g_{j-1}(z)A_\ell^{-\ell^{j-2}}|D_c(\ell)\right)-g_{j-1}(z)A_\ell^{-\ell^{j-2}}|D_c(\ell)\right)\geq \frac{v(\ell)}{2}(\ell^{j-2}(\ell-1)-\frac{3c}{2}+2).
\end{equation}

In order for $g_{j-1}(z)A_\ell^{-\ell^{j-2}}|D_c(\ell)$ to be congruent to its trace modulo $\ell^j$, we need to have the valuation in \eqref{val-diff-trace} be $\geq jv(\ell)$.
A straightforward verification shows that the inequality is indeed satisfied for all $(\ell, j)$ except $(5,2), (5,3), (7,2), (11,2), (13,2)$.

\end{proof}


To study the second summand of \eqref{4parts} set



\begin{equation*}
    B_{j,\ell}(z):= g_{j-1}-g_{j-1}(z)\frac{E_{\chi}(z)^{\ell^{j-2}}}{A_{\ell}(z)^{\ell^{j-2}}}\equiv 0\pmod{\ell^{j-1}}.
\end{equation*}
We also have $E_{\ell-1}(z)^{\frac{1}{2}\ell^{j-2}(\ell-1)}-1\equiv 0 \pmod{\ell^{j-1}}$, hence 



\begin{equation*}
    E_{\ell-1}(z)^{\frac{1}{2}\ell^{j-2}(\ell-1)}B_{j,\ell}(z)\equiv B_{j,\ell}(z)\pmod{\ell^{2j-2}}.
\end{equation*}
Since $j\geq 2$ we have $2j-2\geq j$. It follows that the second summand of \eqref{4parts} modulo $\ell^j$ is 

\begin{equation}
    \label{Sum22}
    B_{j,\ell}(z)|D_c(\ell)\in M_{\frac{1}{2}\ell^{j-2}(\ell-1)+\frac{c}{2}}(\Gamma_0(\ell))\cap\mathbb{Z}_{(\ell)}\llbracket q \rrbracket
\end{equation}

\begin{proposition}
\label{pro2}
 The form  $B_{j,\ell}(z)|D_c(\ell)$ is congruent modulo $\ell^j$ to a form in $S_{\frac{1}{2}\ell^{j-1}(\ell-1)+\frac{c}{2}}\cap\mathbb{Z}_{(\ell)}\llbracket q \rrbracket$.

\end{proposition}

\begin{proof} 
We have
\begin{align*}
\dfrac{B_{j,\ell}(z)}{\ell^{j-1}}\mid D_c(\ell)&=\left(\dfrac{B_{j,\ell}(z)}{\ell^{j-1}}\cdot\Phi_\ell^c(z)\right)\mid U(\ell)\equiv \left(\frac{B_{j,\ell}(z)}{\ell^{j-1}}~\Delta(z)^{\frac{c(\ell^2-1)}{24}}\right)\mid U(\ell)\\&\equiv  \left(\dfrac{B_{j,\ell}(z)}{\ell^{j-1}}~\Delta(z)^{\frac{c(\ell^2-1)}{24}}\right)\mid U(\ell)  E_{\ell-1}(z)^{\frac{1}{2}\ell^{j-2}(\ell-1)-c\left(\frac{\ell+1}{2}\right)}\pmod{\ell}.
\end{align*} 
Multiplying by $\ell^{j-1}$ gives

\begin{align*}
    B_{j,\ell}(z)\mid D_c(\ell)\equiv  \left(B_{j,\ell}(z)~\Delta(z)^{\frac{c(\ell^2-1)}{24}}\right)\mid U(\ell)  E_{\ell-1}(z)^{\frac{1}{2}\ell^{j-2}(\ell-1)-c\left(\frac{\ell+1}{2}\right)}\pmod{\ell^j}.
\end{align*}
This form lies in $M_{\frac{1}{2}\ell^{j-1}(\ell-1)+\frac{c}{2}}^!(\Gamma_0(\ell))\cap\mathbb{Z}_{(\ell)}\llbracket q \rrbracket$. It remains to show that 

\begin{align*}
   \left(B_{j,\ell}(z)~\Delta(z)^{\frac{c(\ell^2-1)}{24}}\right)\mid U(\ell)=\left(\left( g_{j-1}-g_{j-1}(z)\frac{E_{\chi}(z)^{\ell^{j-2}}}{A_{\ell}(z)^{\ell^{j-2}}}\right)~\Delta(z)^{\frac{c(\ell^2-1)}{24}}\right)\mid U(\ell)
\end{align*}
is congruent modulo $\ell^j$ to a cusp form on SL$_2(\mathbb{Z})$. Since $g_{j-1}(z)\Delta(z)^{\frac{c(\ell^2-1)}{24}}\in S_{\frac{1}{2}\ell^{j-2}(\ell-1)+c\left(\frac{\ell^2-1}{2}\right)+\frac{c}{2}}$ and $\frac{1}{2}\ell^{j-2}(\ell-1)+c\left(\frac{\ell^2-1}{2}\right)+\frac{c}{2}-1\geq j$, we see from Proposition \ref{Stab} that $g_{j-1}(z)\Delta(z)^{\frac{c(\ell^2-1)}{24}}|U(\ell)$ is congruent modulo $\ell^j$ to a cusp form in the same space. Therefore it suffices to show that 

\begin{equation*}
   \left(g_{j-1}(z)\frac{E_{\chi}(z)^{\ell^{j-2}}}{A_{\ell}(z)^{\ell^{j-2}}}~\Delta(z)^{\frac{c(\ell^2-1)}{24}}\right)\mid U(\ell)
\end{equation*}
is congruent to a form in $S_{\frac{1}{2}\ell^{j-2}(\ell-1)+c\left(\frac{\ell^2-1}{2}\right)+\frac{c}{2}}$. For convenience, we define 

\begin{equation*}
    C_{j,\ell}(z):=g_{j-1}(z)\frac{E_{\chi}(z)^{\ell^{j-2}}}{A_{\ell}(z)^{\ell^{j-2}}}~\Delta(z)^{\frac{c(\ell^2-1)}{24}}\in M_{\frac{1}{2}\ell^{j-2}(\ell-1)+c\left(\frac{\ell^2-1}{2}\right)+\frac{c}{2}}^!(\Gamma_0(\ell)).
\end{equation*}
Applying the trace operator to $C_{j,\ell}|W_\ell$ we obtain

\begin{align*}
    \ell^{\frac{\ell^{j-2}(\ell-1)+c\left(\ell^2-1\right)+c}{4}-1}&\text{Tr}\left(C_{j,\ell}(z)\mid_{\frac{1}{2}\ell^{j-2}(\ell-1)+c\left(\frac{\ell^2-1}{2}\right)+\frac{c}{2}}W_\ell\right)\\&=C_{j,\ell}(z)\mid U(\ell)+\ell^{\frac{\ell^{j-2}(\ell-1)+c\left(\ell^2-1\right)+c}{4}-1}C_{j,\ell}(z)\mid_{\frac{1}{2}\ell^{j-2}(\ell-1)+c\left(\frac{\ell^2-1}{2}\right)+\frac{c}{2}}W_\ell.
\end{align*}
This form is on SL$_2(\mathbb{Z})$ by Lemma 2.1(3) in \cite{BMJ}. We shall show that 

\begin{equation*}
   \ell^{\frac{\ell^{j-2}(\ell-1)+c\left(\ell^2-1\right)+c}{4}-1}C_{j,\ell}(z)\mid_{\frac{1}{2}\ell^{j-2}(\ell-1)+c\left(\frac{\ell^2-1}{2}\right)+\frac{c}{2}}W_\ell\equiv 0 \pmod{\ell^j}.
\end{equation*}
Following the proof of Proposition 3.3 in \cite{BMJ}, we obtain

\begin{align*}
   g_{j-1}(z)\Delta(z)^{\frac{c(\ell^2-1)}{24}}\mid_{\frac{1}{2}\ell^{j-2}(\ell-1)+\frac{c}{2}(\ell^2-1)+\frac{c}{2}}W_\ell=\ell^{\frac{1}{4}\ell^{j-2}(\ell-1)+\frac{c}{4}(\ell^2-1)+\frac{c}{4}}g_{j-1}(\ell z)\Delta(\ell z)^{\frac{c(\ell^2-1)}{24}}.
\end{align*}
From \eqref{Evalution} and direct computation with the transformation law for $\eta(z)$, we have
\begin{align*}
  & v\left(E_\chi(z)^{\ell^{j-2}}\mid_{\frac{1}{2}\ell^{j-2}(\ell-1)} W_\ell\right)=\ell^{j-2}v\left(E_\chi\mid _\frac{\ell-1}{2}W_\ell\right)=\frac{\ell^{j-2}(\ell+1)}{4}v(\ell),\\&
    v\left(A_\ell(z)^{-\ell^{j-2}}\mid_{\frac{1}{2}\ell^{j-2}(\ell-1)} W_\ell\right)=-\frac{1}{4}(\ell^{j-1}+\ell^{j-2})v(\ell).
\end{align*}
We thus get
\begin{align*}
    &v\left( \ell^{\frac{\ell^{j-2}(\ell-1)+c\left(\ell^2-1\right)+c}{4}-1}C_{j,\ell}(z)\mid_{\frac{1}{2}\ell^{j-2}(\ell-1)+c\left(\frac{\ell^2-1}{2}\right)+\frac{c}{2}}W_\ell\right)
    =\left(\frac{1}{2}\ell^{j-1}+\frac{1}{2}c\ell^2-\frac{1}{2}\ell^{j-2}-1\right)v(\ell)\geq jv(\ell),
\end{align*}
completing the proof of the Proposition.

\end{proof}

In a similar manner to the above (and also to the proof of Lemma 3.1 in \cite{BMJ}), we can prove that each of the first and third summands of \eqref{4parts} is congruent modulo $\ell^j$ to a form in $S_{\frac{1}{2}(\ell^{j-1}(\ell-1)+\frac{c}{2}}\cap \Z_{(\ell)}\llbracket q\rrbracket$, and we omit the details for brevity. This completes the proof of Lemma \ref{baseseclem}.  
\end{proof}

\begin{lemma}
\label{cuspodev}
If $b\geq 0$ is even, then $R_\ell(b;z)$ is congruent modulo $\ell^m$ to a form in $S_{\frac{\ell^{m-1}(\ell-1)+c}{2}}$. If $b$ is odd, then $R_\ell(b;z)$ is congruent modulo $\ell^m$ to a form in  $S_{k_\ell(m)}$, where $k_\ell(m)$ is defined by \eqref{klj}.
\end{lemma}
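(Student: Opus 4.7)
The plan is to prove the lemma by induction on $b$, with the base case $b=0$ being furnished directly by Proposition~\ref{Rl0congru}. In order to be able to reinvoke Lemma~\ref{baseseclem} at each stage of the induction, I would strengthen the inductive claim so that it asserts the congruence simultaneously for every $1\leq j\leq m$ (rather than only at $j=m$), since Lemma~\ref{baseseclem}'s hypothesis requires precisely this family of congruences indexed by $j$.

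For odd $b$, the strengthened inductive hypothesis on $R_\ell(b-1;z)$ (with $b-1$ even) places it modulo $\ell^j$ inside $S_{\frac{1}{2}\ell^{j-1}(\ell-1)+c/2}$ for each $j$, matching the hypothesis of Lemma~\ref{baseseclem} with $\Psi=R_\ell(b-1;z)$; its conclusion immediately yields $R_\ell(b;z)=R_\ell(b-1;z)\mid D_c(\ell)$ congruent modulo $\ell^j$ to a cusp form in $S_{k_\ell(j)}$, as required. For even $b\geq 2$, write the inductive hypothesis as $R_\ell(b-1;z)\equiv h_j\pmod{\ell^j}$ with $h_j\in S_{k_\ell(j)}$; combining $R_\ell(b;z)=R_\ell(b-1;z)\mid U(\ell)$ with Proposition~\ref{Stab} (via $U(\ell)\equiv T(\ell,k_\ell(j))\pmod{\ell^{k_\ell(j)-1}}$ together with the elementary inequality $k_\ell(j)-1\geq j$) gives $R_\ell(b;z)\equiv h_j\mid T(\ell,k_\ell(j))\pmod{\ell^j}$, with the right-hand side a level-$1$ cusp form of weight $k_\ell(j)$. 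In the generic case $k_\ell(j)=\frac{1}{2}\ell^{j-1}(\ell-1)+c/2$ this is already the target weight and the step closes.

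The main obstacle is the treatment of the exceptional pairs $(\ell,j)\in\calA\cup\{(5,2)\}$ in the odd-to-even step: there $k_\ell(j)$ strictly exceeds the target weight $\frac{1}{2}\ell^{j-1}(\ell-1)+c/2$, so the form produced by $U(\ell)$ lives a priori in a strictly larger weight space. As pointed out in the remark following Proposition~\ref{Fj}, the higher-weight congruence in these exceptional cases actually holds modulo a higher power of $\ell$ than $\ell^j$; exploiting this excess $\ell$-divisibility, together with the filtration bound from Lemma~\ref{Uoperator} and a lift by an appropriate power of $E_{\ell-1}\equiv 1\pmod{\ell}$, allows one to reconcile the two weights and obtain the required congruence at the demanded weight $\frac{1}{2}\ell^{j-1}(\ell-1)+c/2$, completing the induction.
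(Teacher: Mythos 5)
Your overall strategy coincides with the paper's: induct on $b$, strengthen the inductive claim so that it holds for every $1\leq j\leq m$ (which the paper achieves implicitly by proving the lemma for all $m$ at once), feed the even case into Lemma~\ref{baseseclem} to handle $D_c(\ell)$, and use Proposition~\ref{Stab} to replace $U(\ell)$ by the Hecke operator $T(\ell,k)$ in the odd-to-even step. However, there is a genuine gap in your even step at $j=1$, and it is not confined to the exceptional pairs you flag. By \eqref{klj} we have $k_\ell(1)=\tfrac12(c\ell+\ell-1)$ for \emph{every} prime $\ell\geq 5$, and this is never equal to the target weight $\tfrac12(\ell-1)+\tfrac{c}{2}$ demanded by the statement for even $b$ at $m=1$ (nor to your ``generic'' value $\tfrac12\ell^{j-1}(\ell-1)+\tfrac{c}{2}$ with $j=1$). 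The Hecke-operator argument only keeps you in $S_{k_\ell(1)}$; it provides no mechanism to lower the weight. This is not a cosmetic defect: the hypothesis of Lemma~\ref{baseseclem} at the next odd step requires $g_1\in M_{\frac{\ell-1}{2}+\frac{c}{2}}$ at $j=1$, so with only the weight-$k_\ell(1)$ conclusion your induction cannot be restarted. The paper closes exactly this gap by treating $m=1$ separately with the filtration descent of Lemma~\ref{baselem}(a): since $w_\ell(R_\ell(b+1;z))\leq k_\ell=\tfrac12(c\ell+\ell-1)$ and the congruence \eqref{cong1} holds, Lemma~\ref{Uoperator} forces $w_\ell(R_\ell(b+1;z)\mid U(\ell))\leq \tfrac12(\ell-1)+\tfrac{c}{2}$, which is what lands the even case in $S_{\frac{\ell-1+c}{2}}$ modulo $\ell$.

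A secondary remark: your proposed repair of the exceptional pairs $(\ell,j)\in\calA\cup\{(5,2)\}$ via ``excess $\ell$-divisibility'' plus Lemma~\ref{Uoperator} is not substantiated. The filtration $w_\ell$ is a mod-$\ell$ notion and cannot by itself control congruences modulo $\ell^j$ for $j\geq 2$, and in the exceptional cases the mod-$\ell^j$ congruence is only ever established at the inflated weight $k_\ell(j)$ (this is precisely why \eqref{klj} is defined with those larger values); the paper's own proof simply accepts the weight $k_\ell(m)$ there rather than reconciling it downward. So you should either prove the weight reduction you assert or weaken your even-case conclusion accordingly; but in either case the $j=1$ descent via Lemma~\ref{baselem}(a) must be added for the induction to go through.
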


\begin{proof}
The base case $b=0$ follows directly from Proposition \ref{Rl0congru}. For the induction step, let $b\geq0$ be an even integer for which the statement holds.We can thus apply Lemma \ref{baseseclem} to deduce that there exists $h_m(b;z)\in S_{k_\ell(m)}\cap\mathbb{Z}_{(\ell)}\llbracket q \rrbracket$ with   
\[
R_\ell(b+1;z)=R_\ell(b;z)\mid D_c(\ell)\equiv h_m(b,z)\pmod{\ell^m}.
\]
For $m=1$ and all primes $\ell\geq5$, by Lemma \ref{baselem} there exists $h_1(b+1;z)\in S_{\frac{1}{2}(\ell-1)+\frac{c}{2}}\cap\mathbb{Z}_{(\ell)}\llbracket q \rrbracket$ with
\[
R_\ell(b+2;z)=R_\ell(b+1;z)\mid U(\ell)\equiv h_1(b+1;z)\pmod{\ell}.
\]
For $m\geq2$ and primes $\ell\geq5$ we have 
\[k_\ell(m)\geq \frac{1}{2}\ell^{m-1}(\ell-1
)+\frac{c}{2}\geq m.
\]
It follows from Proposition \ref{Stab} that 
\[
R_\ell(b+2;z)=R_\ell(b+1;z)\mid U(\ell)\equiv h_m(b;z)\mid U(\ell)\equiv h_m(b;z)\mid T(\ell,k_\ell(m))\pmod{\ell^m},
\]
and the result follows since  the Hecke operator $ T(\ell,k_\ell(m))$ preserves spaces of level one cusp forms.

\end{proof}

\begin{example}
    For $\ell=17$ we have $c=8$. 
    The following explicit congruences, which were computed using PARI/GP \cite{PARI2}, illustrate Lemma \ref{cuspodev} for $R_{17}(b;z) \pmod{17^j}$ for $0\leq b \leq 3$ and $1\leq j \leq 3$.
\begin{align*}
     R_{17}(0;z)&\equiv \Delta(z)\pmod{17},\\
     R_{17}(0;z)&\equiv \Delta(z)E_{128}(z)\pmod{17^2},\\ 
R_{17}(0;z)&\equiv \Delta(z)E_{2304}(z)+ 3179\Delta^2(z)E_{2292}(z)+867\Delta^3(z)E_{2280}(z) \pmod{17^3}. 
\end{align*} 
\begin{align*}
     R_{17}(1;z)&\equiv 7\Delta^6(z)E_4(z) \pmod{17},\\
     R_{17}(1;z)&\equiv 7\Delta^6(z)E_{68}(z)+ 34\Delta^7(z)E_{56}(z) \pmod{17^2},\\
 R_{17}(1;z)&\equiv 7\Delta^6(z)E_{2244}(z)+ 2346\Delta^7(z)E_{2232}(z)+3757\Delta^8(z)E_{2220}(z)\\&+ 3468\Delta^9(z)E_{2208}(z) \pmod{17^3}.
\end{align*} 
\begin{align*}
     R_{17}(2;z)&\equiv 11\Delta(z) \pmod{17},\\
 R_{17}(2;z)&\equiv 96\Delta(z)E_{128}(z)+ 34\Delta^2(z)E_{114}(z) \pmod{17^2},\\ 
 R_{17}(2;z)&\equiv 1252\Delta(z)E_{2304}(z)+ 612\Delta^2(z)E_{2292}(z)+867\Delta^3(z)E_{2280}(z) \pmod{17^3}. 
\end{align*}
 \begin{align*}
     R_{17}(3;z)&\equiv 9\Delta^6(z)E_4(z) \pmod{17},\\
     R_{17}(3;z)&\equiv 94\Delta^6(z)E_{68}(z)+ 85\Delta^7(z)E_{56}(z) \pmod{17^2},\\
      R_{17}(3;z)&\equiv 2117\Delta^6(z)E_{2244}(z)+ 4131\Delta^7(z)E_{2232}(z)+2023\Delta^8(z)E_{2220}(z)\\&+ 3757\Delta^9(z)E_{2208}(z) \pmod{17^3}.
 \end{align*}
Note that, for example, $R_{17}(3;z)\equiv 2408 R_{17}(1;z) \pmod{17^3}$. This is consistent with Theorem \ref{thm1}, which we shall attend to proving in the following section. 
    
 \end{example}


\section{An injection of $\Omega_{\ell,\text{reg}}^\text{odd}(m)$ to cusp forms and the proof of Theorem \ref{thm1}}

Let $S^{\text{even}}(\ell,m)$ and $S^{\text{odd}}(\ell,m)$ denote the $\Z/\ell^m\Z$-modules obtained by reducing modulo $\ell^m$ the forms in $S_{\frac{1}{2}\ell^{m-1}(\ell-1)+\frac{c}{2}}\cap\mathbb{Z}_{(\ell)}\llbracket q \rrbracket$ and $S_{k_\ell(m)}\cap\mathbb{Z}_{(\ell)}\llbracket q \rrbracket$, respectively. Note that both modules are finite since each space of modular forms is generated by finitely many modular forms with integer coefficients. 

\begin{lemma}\label{xcisom}
    The operator $X_c(\ell)$ is an automorphism of the $\Z/\ell^m \Z$-module $\Omega_{\ell,\text{reg}}^{\text{odd}}(m)$. Furthermore, there exists a positive integer $n$ such that $X_c^n(\ell)$ is the identity map on $\Omega_{\ell,\text{reg}}^{\text{odd}}(m)$. 
\end{lemma}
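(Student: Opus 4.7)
The plan is to exploit the recursive definition of $R_\ell(b;z)$ and the finiteness of $\Omega_{\ell,\text{reg}}^{\text{odd}}(m)$. First I would unwind the definitions: since the odd-to-odd step in the recursion is $R_\ell(2b+3;z) = (R_\ell(2b+1;z) \mid U(\ell)) \mid D_c(\ell) = R_\ell(2b+1;z) \mid X_c(\ell)$, the operator $X_c(\ell)$ carries $R_\ell(2b+1;z)$ to $R_\ell(2b+3;z)$. In particular, $X_c(\ell)$ sends the generating set $\{R_\ell(2\beta+1;z) : \beta \geq b\}$ of $\lamrodd(2b+1,m)$ into the generating set of $\lamrodd(2b+3,m)$, so $X_c(\ell)$ defines a $\mathbb{Z}/\ell^m\mathbb{Z}$-linear map $\lamrodd(2b+1,m)\to \lamrodd(2b+3,m)$.

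Next I would choose $b$ so that $2b+1 \geq \mathfrak{b}_\ell(m)$; then by Theorem \ref{thm1}(1) we have $\Omega_{\ell,\text{reg}}^{\text{odd}}(m) = \lamrodd(2b+1,m) = \lamrodd(2b+3,m)$, and hence $X_c(\ell)$ restricts to an endomorphism of $\Omega_{\ell,\text{reg}}^{\text{odd}}(m)$. Surjectivity follows immediately: the image contains every $R_\ell(2\beta+3;z)\pmod{\ell^m}$ for $\beta\geq b$, and these already generate $\lamrodd(2b+3,m) = \Omega_{\ell,\text{reg}}^{\text{odd}}(m)$.

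To promote surjectivity to bijectivity I would invoke finiteness. By Lemma \ref{cuspodev}, each $R_\ell(2\beta+1;z)\pmod{\ell^m}$ is congruent to a form in $S_{k_\ell(m)}\cap \mathbb{Z}_{(\ell)}\llbracket q\rrbracket$, so $\Omega_{\ell,\text{reg}}^{\text{odd}}(m)$ embeds in the finite $\mathbb{Z}/\ell^m\mathbb{Z}$-module $S^{\text{odd}}(\ell,m)$ and is therefore itself finite. A surjective endomorphism of a finite module is automatically injective (for instance, by cardinality, or equivalently because the descending chain of images $\mathrm{im}(X_c^j(\ell))$ stabilizes), hence $X_c(\ell)$ is an automorphism.

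Finally, the torsion statement falls out for free: the automorphism group of a finite abelian group is finite, so $X_c(\ell)$, viewed as an element of $\mathrm{Aut}(\Omega_{\ell,\text{reg}}^{\text{odd}}(m))$, has finite order, giving the desired $n$ with $X_c^n(\ell) = \mathrm{id}$. The main (minor) subtlety I anticipate is making sure the proof respects the two parities correctly, namely that $X_c(\ell)$ is genuinely the odd-to-odd transition operator rather than the even-to-even one; this is just a matter of carefully tracking the recursion, and no further modular-forms input beyond Lemma \ref{cuspodev} is needed.
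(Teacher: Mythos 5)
Your argument is in substance the same as the paper's: $X_c(\ell)$ maps the generating set of $\lamrodd(2b+1,m)$ onto that of $\lamrodd(2b+3,m)$, finiteness of $S^{\text{odd}}(\ell,m)$ (via Lemma \ref{cuspodev}) makes the stabilized module finite, a surjective endomorphism of a finite module is bijective, and finiteness of the automorphism group gives the torsion statement. Your parity bookkeeping is also correct: $X_c(\ell)=(\,\cdot\mid U(\ell))\mid D_c(\ell)$ is indeed the odd-to-odd transition.

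There is, however, one genuine logical flaw in the write-up: you invoke Theorem \ref{thm1}(1) to obtain the stabilization $\lamrodd(2b+1,m)=\lamrodd(2b+3,m)$ for large $b$, but in the paper the stabilization claim of Theorem \ref{thm1} is itself deduced from (the proof of) this very lemma, so your argument as stated is circular. The fix is immediate and you already have the needed ingredient in your third paragraph: the chain $S^{\text{odd}}(\ell,m)\supseteq\lamrodd(1,m)\supseteq\lamrodd(3,m)\supseteq\cdots$ is a descending chain of submodules of the \emph{finite} module $S^{\text{odd}}(\ell,m)$, hence is eventually constant; and once $\lamrodd(2b+1,m)=\lamrodd(2b+3,m)$ holds for one $b$, applying $X_c(\ell)$ propagates the equality to all larger indices since $\lamrodd(2b+3,m)=X_c(\ell)\bigl(\lamrodd(2b+1,m)\bigr)$. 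You should derive stabilization this way, before and independently of Theorem \ref{thm1}, and then the rest of your proof goes through verbatim.
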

\begin{proof}
From Lemma \ref{cuspodev}, we have the following nesting of $\mathbb{Z}/\ell^m\mathbb{Z}$-modules: 
\begin{align*}
  S^{\text{odd}}(\ell,m)\supseteq \lamrodd(1,m)\supseteq \lamrodd(3,m)\supseteq \cdots \supseteq \lamrodd(2b+1,m)\supseteq \cdots.
\end{align*}
Since $S_{k_\ell(m)}\cap\mathbb{Z}_{(\ell)}\llbracket q \rrbracket$ is finitely generated, we see that $ S^{\text{odd}}(\ell,m)$ is a finite module, hence we must have $\lamrodd(b,m)=\lamrodd(b+2,m)$ within a finite number of steps, afterwards the sequence must stabilize to $\Omega_{\ell,\text{reg}}^\text{odd}(m)$ since the process of going from $b$ to $b+2$ is identical to the one used to go from $b+2$ to $b+4$. Essentially by definition, we have $\lamrodd(b+2,m)=X_c(\lamrodd(b,m))$. It follows that $X_c(\ell)$ is a surjective endomorphism of the finite $\Z/\ell^m \Z$-module $\Omega_{\ell,\text{reg}}^\text{odd}(m)$, and hence a bijection. Since the automorphism group of a finite ring is itself a finite group; it follows that each element of it has a finite order.
\end{proof}


Consider the following submodules of $S^{\text{odd}}(\ell,m)$:
 \[
 S_0=\left\{ f(z)E_{\ell-1}^{\frac{1}{2}(\ell^{m-1}-c-1)}(z): f(z)\in S_{\frac{c\ell+\ell-1}{2}}\cap\mathbb{Z}_{(\ell)}\llbracket q\rrbracket \right\},
 \]
    
\[S_1=\left\{ g(z):g(z)=\sum_{j=m_0}^{\infty}a_jq^j\in S_{\frac{1}{2}\ell^{m-1}(\ell-1)+\frac{c}{2}} \text{ with } m_0> \text{dim}(S_{\frac{c\ell+\ell-1}{2}}) \right\}.
\]
It is well known that any level 1 modular form can be expressed as a polynomial in the forms $E_4,E_6$ which have integral coefficients. Hence we can construct a basis $\{f_i=q^i+O(q^{i+1}): 1\leq i\leq n=\dim(S_{\frac{1}{2}\ell^{m-1}(\ell-1)+\frac{c}{2}})\}$ for $S_{\frac{1}{2}\ell^{m-1}(\ell-1)+\frac{c}{2}}\cap\mathbb{Z}\llbracket q\rrbracket$ with $f_k(z)\in S_0$ for $k\leq \lfloor\frac{c\ell+\ell-1}{24}\rfloor$ and $f_k(z)\in S_1$ otherwise. It follows that $S^\text{odd}(\ell,m)=S_0\bigoplus S_1$. In particular, we have a natural projection map $\pi_0:S^\text{odd}(\ell,m)\rightarrow S_0$

\begin{lemma}
    \label{ordLemm}
Let $\mathcal{S}$ be a $\Z/\ell^m \Z$ submodule of $S^{\text{odd}}(\ell,m)$ on which $X_c(\ell)$ is an isomorphism. Suppose that $f(z)\in \mathcal{S}$ has $v(f)=i<m$, and  $f(z)=f_0(z)+f_1(z)$ with $f_i\in S_i$. Then we have $v(f_1)>i$ and $v(f_0)=i$. Also, as $\Z/\ell^m \Z$ modules we have $\mathcal{S}\cong\pi_0(\mathcal{S})\subset S_0$.
\end{lemma}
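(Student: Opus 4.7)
\textbf{Proof plan for Lemma \ref{ordLemm}.} The approach combines the finite-order property of $X_c(\ell)$ on $\mathcal{S}$ (Lemma \ref{xcisom}) with the filtration-lowering property of Lemma \ref{baselem}(c), leveraging $\dim S_{k_\ell}$ as a rigid obstruction. As preparation, I fix a Miller-style $\Z$-basis $\{\tilde f_1,\ldots,\tilde f_n\}$ of $S_{k_\ell(m)}\cap \Z\llbracket q\rrbracket$ with $\tilde f_i=q^i+O(q^{i+1})$, constructible since every level-$1$ cusp form is a polynomial over $\Z$ in $E_4$, $E_6$, and $\Delta$. The triangular leading terms yield $v(\sum_i c_i\tilde f_i)=\min_i v(c_i)$, which implies that the decomposition $f=f_0+f_1$ induced by $S^{\text{odd}}(\ell,m)=S_0\oplus S_1$ satisfies $v(f)=\min(v(f_0),v(f_1))$. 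The key dimensional observation is then: setting $d:=\dim S_{(c\ell+\ell-1)/2}=\dim S_{k_\ell}$, any nonzero $\F_\ell$-reduction $\bar h$ of an element of $S_1$ vanishes at $\infty$ to order $>d$, so cannot be congruent modulo $\ell$ to any form in $S_{k_\ell}$; since $k_\ell\equiv 12\lceil(\ell-1)/24\rceil\pmod{\ell-1}$ sets the filtration class, this forces $w_\ell(\bar h)>k_\ell$.

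The main argument is a proof by contradiction. Suppose $v(f_1)=v(f)=i<m$, and set $\bar g_0:=(f/\ell^i)\bmod\ell$. Its $S_1$-part is nonzero, while its $S_0$-part (if nonzero) has filtration $\le k_\ell$; the two have distinct filtrations, so $w_\ell(\bar g_0)>k_\ell$. Next, since $X_c(\ell)$ is $\Z$-linear and preserves $\Z_{(\ell)}\llbracket q\rrbracket$, we have $v(X_c(\ell)(h))\ge v(h)$ for any $h$; combined with $X_c(\ell)^n(f)=f$ from Lemma \ref{xcisom}, this forces $v(X_c(\ell)^k f)=i$ for every $k\ge 0$. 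Hence $\bar g_k:=(X_c(\ell)^k f)/\ell^i\bmod \ell$ is well-defined, nonzero, and satisfies $\bar g_{k+1}=\bar g_k\mid X_c(\ell)$ by $\Z$-linearity. The filtration class $w_\ell(\bar g_k)\bmod(\ell-1)$ is preserved throughout the orbit, so Lemma \ref{baselem}(c) forces a strict drop whenever $w_\ell(\bar g_k)>k_\ell$, while part (a) traps the filtration at $\le k_\ell$ once it arrives there. Consequently $w_\ell(\bar g_k)\le k_\ell$ for all sufficiently large $k$, yet $\bar g_n=\bar g_0$ has $w_\ell>k_\ell$, a contradiction. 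Therefore $v(f_1)>i$, and hence $v(f_0)=v(f)=i$.

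The isomorphism claim follows immediately: $\ker(\pi_0|_\mathcal{S})$ consists of $f\in\mathcal{S}$ with $f_0=0$, so any nonzero such $f$ would satisfy $v(f)=v(f_1)<m$, contradicting the strict inequality $v(f_1)>v(f)$ just established. I expect the main technical obstacle to be the synchronization of the valuation and filtration dynamics along the orbit: one must verify cleanly that $X_c(\ell)$ commutes with the formal division by $\ell^i$ in the sense used here, and that at every step the filtration remains in the congruence class modulo $\ell-1$ required by Lemma \ref{baselem}, so that parts (a) and (c) both apply throughout the iteration.
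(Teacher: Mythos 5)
Your proof is correct and follows essentially the same route as the paper, which simply defers the argument to Lemma 4.4 of Boylan--Webb: the triangular Miller basis gives $v(f)=\min(v(f_0),v(f_1))$, the order-of-vanishing bound forces $w_\ell(\bar h)>k_\ell$ for any nonzero reduction coming from $S_1$, and the filtration descent/trapping under $X_c(\ell)$ from Lemma \ref{baselem}(a),(c) combined with the finite order of $X_c(\ell)$ on $\mathcal{S}$ yields the contradiction. Your writeup in fact supplies the details the paper omits, including the two points you flag at the end (compatibility of $X_c(\ell)$ with division by $\ell^i$, and preservation of the filtration class modulo $\ell-1$ along the orbit), both of which check out.
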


\begin{proof}
The proof proceeds as Lemma 4.4 of \cite{BMJ}, and we omit the details for brevity.  
\end{proof}

   




   

\begin{theorem}
\label{injection}
Let $\ell \geq 5$ be prime, and let $m\geq 1$. Then there exist an injective $\mathbb{Z}/\ell^m\mathbb{Z}-$module homomorphism 

\[
    \Pi_o:\Omega_{\ell,\text{reg}}^{\text{odd}}(m)\hookrightarrow S_{\frac{1}{2}(c\ell+\ell-1)}
\]
such that for all $f \in \Omega_{\ell,\text{reg}}^{\text{odd}}(m)$ we have
\begin{enumerate}[label=(\alph*)]
\item $\text{ord}_\infty(f)=\text{ord}_\infty (\Pi_o(f))$, and

    
\item      $\Pi_o(f)\equiv f \pmod{\ell^{v(f)+1}}.$

\end{enumerate}
\end{theorem}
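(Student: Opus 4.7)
The plan is to follow the Boylan--Webb framework (adapted to the Nebentypus setting) by using the Miller-basis splitting $S^{\text{odd}}(\ell,m) = S_0 \oplus S_1$ together with Lemma \ref{ordLemm} to extract a canonical low-weight representative of each class. Given $F \in \Omega_{\ell,\text{reg}}^{\text{odd}}(m)$, decompose $F = f_0 + f_1$ with $f_0 \in S_0$ and $f_1 \in S_1$. By the very definition of $S_0$, there is a unique $g \in S_{\frac{1}{2}(c\ell + \ell - 1)} \cap \Z_{(\ell)}\llbracket q \rrbracket$ with $f_0 = g \cdot E_{\ell-1}^N$, where $N = \tfrac{1}{2}(\ell^{m-1} - c - 1)$ is forced by matching weights in the generic case (in the exceptional cases of \eqref{klj}, $N$ is adjusted accordingly). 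I would then define $\Pi_o(F)$ to be the reduction of $g$ modulo $\ell^m$; linearity of the decomposition and uniqueness of $g$ guarantee that $\Pi_o$ is a well-defined $\Z/\ell^m\Z$-module homomorphism.

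For injectivity, I would apply Lemma \ref{ordLemm} to the submodule $\mathcal{S} = \Omega_{\ell,\text{reg}}^{\text{odd}}(m)$, whose hypotheses are met thanks to Lemma \ref{xcisom}. If $\Pi_o(F) = 0$ then $f_0 \equiv 0 \pmod{\ell^m}$, so $v(f_0) \geq m$; the contrapositive of Lemma \ref{ordLemm}'s assertion that ``$v(F) < m$ implies $v(f_0) = v(F)$'' forces $v(F) \geq m$, i.e.\ $F$ vanishes in $\Omega_{\ell,\text{reg}}^{\text{odd}}(m)$. Property (a) then follows from the Miller structure: elements of $S_1$ have $q$-expansion starting strictly beyond $d_0 := \dim S_{\frac{1}{2}(c\ell + \ell - 1)}$, so if $F \neq 0$ then $\text{ord}_\infty(F) \leq d_0$ (else $f_0 = 0$, which would force $F = 0$ by the injectivity argument above), whence the leading term of $F$ must come from $f_0$, giving $\text{ord}_\infty(F) = \text{ord}_\infty(f_0) = \text{ord}_\infty(g)$ since $E_{\ell-1}$ has constant term $1$.

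For property (b) I would write $F - \Pi_o(F) = f_1 + g\left(E_{\ell-1}^N - 1\right)$. Lemma \ref{ordLemm} yields $v(f_1) > v(F)$, while Lemma \ref{lemmaCon1} combined with a binomial expansion of $E_{\ell-1}^N - 1$ gives $v(E_{\ell-1}^N - 1) \geq v(\ell)$, so $v\left(g(E_{\ell-1}^N - 1)\right) \geq v(g) + v(\ell) = v(F) + v(\ell) \geq v(F) + 1$. Combining the two estimates yields $v\left(F - \Pi_o(F)\right) \geq v(F) + 1$, which is precisely the claimed congruence modulo $\ell^{v(F)+1}$. The main obstacle I anticipate is the bookkeeping around the exceptional weight cases from \eqref{klj}, where $k_\ell(m)$ exceeds the generic value $\tfrac{1}{2}\ell^{m-1}(\ell-1) + \tfrac{c}{2}$ and one must re-verify both the integrality and non-negativity of the modified $N$ as well as the applicability of Lemma \ref{ordLemm} in those sub-cases; a secondary subtlety is keeping the $\mathfrak{l}$-adic versus $\ell$-adic accounting consistent, since $v(\ell) = 2$ when $\ell \equiv 1 \pmod 4$.
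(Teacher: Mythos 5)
Your proposal is correct and follows essentially the same route as the paper: both define $\Pi_o$ by projecting onto $S_0$ via the Miller-basis splitting $S^{\text{odd}}(\ell,m)=S_0\oplus S_1$ and dividing out $E_{\ell-1}^{\frac{1}{2}(\ell^{m-1}-c-1)}$, obtain injectivity from Lemma \ref{ordLemm} (whose hypotheses are supplied by Lemma \ref{xcisom}), and deduce (a) and (b) from the basis structure together with $E_{\ell-1}\equiv 1\pmod{\ell}$. Your direct estimate of $v\bigl(F-\Pi_o(F)\bigr)$ via $f_1+g\bigl(E_{\ell-1}^N-1\bigr)$ is just a repackaging of the paper's divide-by-$\ell^i$, reduce mod $\ell$, multiply-back argument.
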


\begin{proof}

Applying Lemma \ref{xcisom} and Lemma \ref{ordLemm}, we see that $\Pi_o(f):={\pi_0(f)}{E_{\ell-1}^{-\frac{1}{2}(\ell^{m-1}-c-1)}}$ indeed gives an injection from 
$\Omega_\ell^{\text{odd}}(m)$ into $S_{\frac{1}{2}(c\ell+\ell-1)}$. Part (a) follows since $\pi_0$ preserves the order of vanishing at $\infty$ and $\text{ord}_\infty (E_{\ell-1})=0$. Furthermore, again by Lemma \ref{ordLemm}, we have $f\equiv \pi_0(f) \pmod{\ell^{i+1}}$ for $f\in \Omega_\ell^{\text{odd}}(m)$ with $v(f)=i$. Utilizing $E_{\ell-1}\equiv 1 \pmod \ell$ we get
\[
\ell^{-i}f\equiv\ell^{-i}\pi_0(f)E_{\ell-1}^{\frac{1}{2}(\ell^{m-1}-c-1)} \pmod \ell.
\]
Multiplying by $\ell^i$ we obtain $f\equiv \Pi_{o}(f) \pmod{\ell^{i+1}}$, completing the proof.

\end{proof}

\begin{proof}[Proof of Theorem \ref{thm1}]

The stabilization of the nested sequence \eqref{nestodd} into $\Omega_{\ell,\text{reg}}^{\text{odd}}(m)$ follows by the proof of Lemma \ref{xcisom}. Since $\text{ord}_\infty(\Phi_\ell^c)=\frac{c(\ell^2-1)}{24}$, it follows that for any $q$-series $F$ with nonnegative exponents we have

\[
    \text{ord}_\infty(F\mid D_c(\ell))\geq \frac{c(\ell^2-1)}{24\ell}.
\]
In particular, since any $f\in \Omega_{\ell,\text{reg}}^{\text{odd}}(m)$ comes from the action of $D_c(\ell)$ on such a $q$-series, we get by Lemma \eqref{ordLemm} (a) that 
\[
\text{ord}_\infty(\Pi_o(f))=\text{ord}_\infty(f)\geq \frac{c(\ell^2-1)}{24 \ell}.
\]
We thus conclude the following bound on the rank:

\begin{equation}\label{rankdim}
    \text{rank}_{\mathbb{Z}/\ell^m\mathbb{Z}}\left(\Omega_{\ell,\text{reg}}^{\text{odd}}(m)\right)\leq \dim(S_{\frac{c\ell+\ell-1}{2}})-\left\lfloor \frac{c(\ell^2-1)}{24\ell}\right\rfloor.
\end{equation}
For $\ell\equiv 1 \pmod{24}$, the bound \eqref{rankbound} follows directly from \eqref{rankdim}. For $\ell\not\equiv 1 \pmod{24}$, write ~$\ell =24t+\delta$ with $5\leq \delta\leq 23$; we thus have $c=25-\delta$. Elementary calculations show that $$\frac{c(\ell^2-1)}{24}=\left(ct+\delta-\frac{\delta^2-1}{24}\right)\ell+\left((\delta-1)t+\frac{\delta^2-1}{24}-1\right),$$ hence we have

\begin{align}\label{rank1}
   \left\lfloor \frac{c(\ell^2-1)}{24\ell}\right\rfloor=ct+\delta-\frac{\delta^2-1}{24}. 
\end{align}
 We note that for all prime $\ell \geq 5$ we have $\frac{c\ell+\ell-1}{2}\not\equiv 2 \pmod {12}$. Utilizing the well-known formula for dimensions of spaces of cusp forms of level $1$ and the elementary formula $$(c+1)(24t+\delta)-1=\left((c+1)t+\delta-\frac{\delta^2-1}{24}\right)24+2(\delta-1),$$ we conclude
\begin{align}\label{rank2}
    \dim(S_{\frac{c\ell+\ell-1}{2}})=\left\lfloor \frac{(c+1)\ell-1}{24}\right\rfloor=(c+1)t+\delta-\frac{\delta^2-1}{24}+\left\lfloor\frac{\delta-1}{12}\right\rfloor.
\end{align}
Combining \eqref{rankdim}, \eqref{rank1} and \eqref{rank2} we obtain \eqref{rankbound} for the remaining cases.

In order to prove the isomorphism $\Omega_{\ell,\text{reg}}^{\text{even}}(m)\cong\Omega_{\ell,\text{reg}}^{\text{odd}}(m)$, note that $\lamrev(b+1,m)$ is the image under $U(\ell)$ of $\lamrodd(b,m)$, hence the finiteness and stabilization follow for the even modules follow from their counterparts for the odd modules. Furthermore, since $X_c(\ell)=U(\ell)\circ D_c(\ell)$, then $U(\ell):\Omega_{\ell,\text{reg}}^{\text{odd}}(m)\rightarrow \Omega_{\ell,\text{reg}}^{\text{odd}}(m)$ must be an isomorphism; completing the proof. 

\end{proof}

\medskip
\begin{remark}
One can similarly construct an injection $\Pi_e:\Omega_{\ell,\text{reg}}^{\text{even}}(m)\hookrightarrow S_{\frac{\ell-1+c}{2}}\cap\mathbb{Z}_{(\ell)}\llbracket q\rrbracket$, leading to a rank bound of $\left\lceil\frac{\ell-1}{24}\right\rceil$. In general, this is a weaker upper bound than \eqref{rankbound}.

\end{remark}

\begin{remark}\label{boundrmrk}
Using the methods of Section 5 of \cite{BMJ}, we can similarly obtain upper bounds for the integers $\mathfrak{b}_\ell(m)$. In the interest of brevity we omit the details but state the results:
\begin{align*}
\begin{split}
& \mathfrak{b}_\ell(1)\leq 2d_\ell +1,\\
&\mathfrak{b}_\ell(m)\leq 2(d_\ell+1)+2(d_\ell'+1)(m-1) \text{ for }~m\geq 2,
\end{split}
\end{align*}
where 

\begin{equation*}
d_\ell:=\min\{t\geq 0: \forall f\in M_{\frac{1}{2}(\ell-1)+\frac{c}{2}}\cap \mathbb{Z}_{(\ell)}\llbracket q\rrbracket,f\mid D_c(\ell)\mid X_c(\ell)^t\in S^{\text{odd}}(\ell,1)\} 
\end{equation*}  
and
\begin{equation*}
d_\ell':=\text{min}\{t\geq 0: \forall f\in M_{\frac{1}{2}(\ell-1)+\frac{c}{2}}\cap q^{\left\lceil \frac{c(\ell^2-1)}{24\ell^2}\right\rceil} \mathbb{Z}_{(\ell)}\llbracket q\rrbracket,f\mid Y_c(\ell)^t\in S^{\text{even}}(\ell,1)\} 
\end{equation*}

\end{remark}

\medskip

\noindent{\bf Data Availibility:} This manuscript has no associated data.

\bibliography{sn-bibliography}
\end{document}